\title{The Goodwillie tower for $S^1$ and Kuhn's theorem}
\author{Mark Behrens}
\address{Department of Mathematics \\ Massachusetts Institute of Technology \\ \newline
Cambridge, MA  02140 \\ USA}
\email{mbehrens@math.mit.edu}
\urladdr{http://math.mit.edu/~mbehrens}
\newcommand{\mc}[1]{\mathcal{#1}}
\newcommand{\mb}[1]{\mathbb{#1}}
\newcommand{\mr}[1]{\mathrm{#1}}
\newcommand{\mital}[1]{\mathit{#1}}
\newcommand{\abs}[1]{\lvert #1 \rvert}
\newcommand{\ul}[1]{\underline{#1}}
\newcommand{\td}[1]{\widetilde{#1}}
\newcommand{\ZZ}{\mathbb{Z}}
\newcommand{\RR}{\mathbb{R}}
\newcommand{\FF}{\mathbb{F}}
\newcommand{\PP}{\mathbb{P}}
\newcommand{\DD}{\mathbb{D}}
\newcommand{\Id}{\mathrm{Id}}
 \newtheorem{thm}[equation]{Theorem}
 \newtheorem{cor}[equation]{Corollary}
 \newtheorem{lem}[equation]{Lemma}
\newtheorem*{thm*}{Theorem}
\newtheorem*{cor*}{Corollary}
\newtheorem*{lem*}{Lemma}
\newtheorem*{prop*}{Proposition}
\theoremstyle{definition}
 \newtheorem{rmk}[equation]{Remark}
\newtheorem*{defn*}{Definition}
\newtheorem*{ex*}{Example}
\newtheorem*{exs*}{Examples}
\newtheorem*{rmk*}{Remark}
\newtheorem*{claim*}{Claim}
\newtheorem*{Ack}{Acknowledgments}
\numberwithin{equation}{section}
\numberwithin{figure}{section}
\DeclareMathOperator{\im}{Im}
\DeclareMathOperator*{\holim}{holim}
\DeclareMathOperator{\Tot}{Tot}
\DeclareMathOperator{\Top}{Top}
\DeclareMathOperator{\Sp}{Sp}
\begin{document}

\begin{abstract}
We analyze the homological behavior of the attaching maps in the $2$-local Goodwillie tower of the identity evaluated at $S^1$.  We show that they exhibit the same homological behavior as the James-Hopf maps used by N.~Kuhn to prove the $2$-primary Whitehead conjecture.  We use this to prove a calculus form of the Whitehead conjecture: the Whitehead sequence is a contracting homotopy for the Goodwillie tower of $S^1$ at the prime $2$.
\end{abstract}

\maketitle

\section{Introduction and statement of results}

The aim of this paper is to explain the relationship between the Goodwillie tower of the identity evaluated on  $S^1$ and the Whitehead conjecture (proved by N.~Kuhn \cite{Kuhn}).  Such a relationship has been conjectured by Arone, Dwyer, Lesh, Kuhn, and Mahowald (see \cite{AroneLesh}, \cite{AroneDwyerLesh}, and \cite{Behrens}).  

The author has learned that similar theorems to the main theorems of this paper (Theorem~\ref{thm:main} and Corollary~\ref{cor:main}) were proved recently by Arone-Dwyer-Lesh, by very different methods. The two proofs were discovered independently and essentially at the same time.

Throughout this paper we freely use the terminology of Goodwillie's homotopy calculus of functors \cite{Goodwillie} and Weiss's orthogonal calculus \cite{Weiss}. 
We use the notation:
\begin{align*}
\{ P_i(F) \} & = \text{Goodwillie tower of $F$}, \\
D_i(F) & = \text{$i$th layer of the Goodwillie tower}, \\
\DD_i(F) & = \text{infinite delooping of $F$ (a spectrum valued functor)}, \\
\partial_i(F) & = \text{$i$th Goodwillie derivative of $F$ (a $\Sigma_i$-spectrum)}, \\
P_i^W, D_i^W, \DD_i^W & = \text{the corresponding constructions in Weiss calculus}. \\
\end{align*}
When $F = \Id$, we omit it from the notation.  We use $E^\vee$ to denote the Spanier-Whitehead dual of a spectrum $E$.  We let $\mr{conn}(X)$ denote the connectivity of a space $X$.  As usual, we let $QX$ denote the space $\Omega^\infty \Sigma^\infty X$.  All homology and cohomology is implicitly taken with $\FF_2$ coefficients.  \emph{Everything in this paper is implicitly localized at the prime $2$}.

Let $\Sp^{n}(S)$ denote the $n$th symmetric power of the sphere spectrum.  There are natural inclusions $\Sp^n(S) \hookrightarrow \Sp^{n+1}(S)$, and, by the Dold-Thom theorem, the colimit is given by  
$$ \Sp^\infty(S) = \varinjlim \Sp^n(S) \simeq H\ZZ. $$
Thus the symmetric products of the sphere spectrum may be regarded as giving an increasing filtration of the integral Eilenberg-MacLane spectrum. 
Nakaoka \cite{Nakaoka} showed that ($2$-locally) the quotients $\Sp^n(S)/\Sp^{n-1}(S)$ are non-trivial only when $n = 2^k$.  The non-trivial quotients are therefore given by the spectra
$$ L(k) := \Sigma^{-k} \Sp^{2^k}(S)/\Sp^{2^{k-1}}(S). $$
These spectra were studied extensively by Kuhn, Mitchell, and Priddy \cite{KMP}, and occur in the stable splittings of classifying spaces.
Applying $\pi_*$ to the symmetric powers filtration gives rise to an exact couple, and hence a homological type spectral sequence
\begin{equation}\label{eq:WSS}
E^1_{k,t} = \pi_{t}L(k) \Rightarrow \pi_{k+t} H\ZZ.
\end{equation}
Kuhn's theorem \cite{Kuhn}, known as the `Whitehead conjecture,' states that this spectral sequence collapses at $E_2$, where it is concentrated on the $k=0$ line.

Arone and Mahowald \cite{AroneMahowald} proved that the layers of the ($2$-local) Goodwillie tower of the identity functor evaluated on spheres satisfy $D_i(S^n) \simeq \ast$ unless $i = 2^k$.  Arone and Dwyer \cite{AroneDwyer} proved there are equivalences
\begin{equation}\label{eq:AroneDwyer}
\Sigma^k \DD_{2^k}(S^1) \simeq \Sigma L(k).
\end{equation}
Applying $\pi_*$ to the the fiber sequences
$$ \Omega^\infty \DD_{2^k}(S^1) \rightarrow P_{2^k}(S^1) \rightarrow P_{2^{k-1}}(S^1) $$
results in an exact couple, giving the \emph{Goodwillie spectral sequence} for $S^1$:
\begin{equation}\label{eq:GSS}
E_1^{k,t} = \pi_t D_{2^k}(S^1) \Rightarrow \pi_t(S^1).
\end{equation}

Spectral sequences (\ref{eq:WSS}) and (\ref{eq:GSS}) both converge to $\ZZ$, and, by (\ref{eq:AroneDwyer}), have isomorphic $E_1$-terms.  They differ in that one is of homological type, and one is of cohomological type, and thus in particular their $d_1$-differentials go in opposite directions.  Kuhn's theorem leads to the following natural question: does the Goodwillie spectral sequence for $S^1$ also collapse at its $E_2$-page?  More specifically, do the $d_1$-differentials in each of the spectral sequences serve as contracting chain homotopies for the $E_1$-pages of the other?  The aim of this paper is to prove that indeed this is the case.

To more precisely state the main theorem of this paper, we need to recall exactly what Kuhn proved in \cite{Kuhn}. 
In his proof of the $2$-primary Whitehead conjecture, Kuhn formed a Kahn-Priddy sequence
\begin{equation}\label{eq:KahnPriddy}
S^1 \leftrightarrows \Omega^\infty \Sigma L(0) 
\begin{array}{c} d_0 \\ \leftrightarrows \\ \delta_0 \end{array}
\Omega^\infty \Sigma L(1) 
\begin{array}{c} d_1 \\ \leftrightarrows \\ \delta_1 \end{array}
\Omega^\infty \Sigma L(2) 
\begin{array}{c} d_2 \\ \leftrightarrows \\ \delta_2 \end{array}
\cdots
\end{equation}
The maps $d_k$ are the infinite loop space maps induced by the composites
$$ L(k+1) = \Sigma^{-k-1} \mr{Sp}^{2^{k+1}}(S)/\mr{Sp}^{2^{k}}(S) \xrightarrow{\partial} \Sigma^{-k} \mr{Sp}^{2^{k}}(S)/\mr{Sp}^{2^{k-1}}(S) = L(k).  $$
The maps $d_k$ may be regarded as the attaching maps between consecutive layers of the symmetric powers filtration on $H\ZZ$, and induce on $\pi_*$ the $d_1$-differentials in the spectral sequence (\ref{eq:WSS}).
To define the maps $\delta_k$, Kuhn constructed summands $X_k$ of the suspension spectra $\Sigma^\infty (S^1)^{\wedge 2^k}_{h\Sigma_2^{\wr k}}$, and showed that these summands are equivalent to $\Sigma L(k)$ \cite[Cor.~1.7]{Kuhn}.  In particular, we have retractions
\begin{equation}\label{eq:summand} 
\xymatrix@C-2em{
\Sigma L(k) \ar[rr]^{\mital{id}} \ar[dr]_{\iota_k} &&
\Sigma L(k) 
\\
& \Sigma^\infty (S^1)^{\wedge 2^k}_{h\Sigma_2^{\wr k}} \ar[ur]_{p_k}.
}
\end{equation}
The maps $\delta_k$ in (\ref{eq:KahnPriddy})
 are given by the composites
$$ \Omega^\infty \Sigma L(k) \xrightarrow{\Omega^\infty \iota_k} Q (S^1)^{\wedge 2^k}_{h\Sigma_2^{\wr k}} \xrightarrow{JH} Q (S^1)^{\wedge 2^{k+1}}_{h\Sigma_2^{\wr (k+1)}} \xrightarrow{\Omega^\infty p_{k+1}} \Omega^\infty \Sigma L(k+1). $$
Here,  $JH$ is the James-Hopf map.
Kuhn showed that the sum
$$ d_{k}\delta_k+\delta_{k-1}d_{k-1} $$
is a self-equivalence of $\Omega^\infty \Sigma L(k)$.  
This amounts to an analysis of the diagram
\begin{equation}\label{eq:Kuhn}
\xymatrix@C-1em{
& E_{-1} \ar@{=}[dl] \ar@/^1pc/@{.>}[dr] &&
E_{0} \ar[dl] \ar@/^1pc/@{.>}[dr]^{h_0} &&
E_{1} \ar[dl] \ar@/^1pc/@{.>}[dr]^{h_1} &&
\\
S^1 && 
\Omega^\infty \Sigma L(0) \ar[ll] \ar[ul] && 
\Omega^\infty \Sigma L(1) \ar[ll]^{d_0} \ar[ul]^{\td{d}_0} &&
\Omega^\infty \Sigma L(2) \ar[ll]^{d_1} \ar[ul]^{\td{d}_1} &
\cdots
}
\end{equation}
where the infinite loop spaces $E_k$ fit into fiber sequences
$$ E_{k} \rightarrow \Omega^\infty \Sigma L(k) \xrightarrow{\td{d}_{k-1}} E_{k-1} $$
and the maps $h_k$ are given by the composites
$$ E_k \rightarrow \Omega^\infty \Sigma L(k) \xrightarrow{\delta_k} \Omega^\infty \Sigma L(k+1). $$ 
To prove the Whitehead conjecture, Kuhn proved the following theorem.

\begin{thm}[{\cite{Kuhn}}]\label{thm:Kuhn}
The composites $\td{d}_k \circ h_k$ are equivalences.
\end{thm}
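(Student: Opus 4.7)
The plan is to derive the theorem from Kuhn's \emph{sum formula}, which asserts that
\[
\phi := d_k \delta_k + \delta_{k-1} d_{k-1} \colon \Omega^\infty \Sigma L(k) \longrightarrow \Omega^\infty \Sigma L(k)
\]
is a self-equivalence, together with a diagram chase along the fiber sequence $E_k \xrightarrow{i_k} \Omega^\infty \Sigma L(k) \xrightarrow{\tilde d_{k-1}} E_{k-1}$. Exploiting $\tilde d_{k-1}\circ i_k \simeq \ast$ and $d_{k-1} = i_{k-1}\circ \tilde d_{k-1}$, a short computation gives
\[
\phi\circ i_k = d_k \delta_k \circ i_k = d_k \circ h_k = i_k\circ(\tilde d_k h_k), \qquad \tilde d_{k-1}\circ \phi = (\tilde d_{k-1} h_{k-1})\circ \tilde d_{k-1},
\]
so $\phi$ is a self-map of the fiber sequence whose restriction to the fiber is $\tilde d_k h_k$ and whose induced endomorphism of the base is $\tilde d_{k-1} h_{k-1}$. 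Every map involved is an infinite loop map coming from a map of spectra, so this really is an endomorphism of a fiber sequence in the stable category. Inducting on $k$, the five lemma applied to the long exact sequence of homotopy groups forces $\tilde d_k h_k$ to be an equivalence as soon as $\phi$ and $\tilde d_{k-1} h_{k-1}$ are, and the base case is the classical $2$-primary Kahn--Priddy theorem, which identifies $d_{-1}\delta_{-1}$ as a self-equivalence of $S^1 = E_{-1}$.

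The remaining work is Kuhn's sum formula, which I would prove by a mod-$2$ homology calculation. The description of $H_*(L(k);\FF_2)$ via Steinberg idempotents in $H_*((B\ZZ/2)^{\times k};\FF_2)$ from \cite{KMP} implies that $H_*(\Omega^\infty \Sigma L(k);\FF_2)$ is generated over the Dyer--Lashof algebra by a single fundamental class $\epsilon_k \in H_1(\Sigma L(k))$. On this generator, $d_k$ is identified with an explicit ``$\Sq^0$-type'' map between Steinberg submodules coming from the symmetric-power attaching map, while $\delta_k$ factors as
\[
\Omega^\infty\Sigma L(k) \xrightarrow{\Omega^\infty\iota_k} Q(S^1)^{\wedge 2^k}_{h\Sigma_2^{\wr k}} \xrightarrow{JH} Q(S^1)^{\wedge 2^{k+1}}_{h\Sigma_2^{\wr(k+1)}} \xrightarrow{\Omega^\infty p_{k+1}} \Omega^\infty\Sigma L(k+1)
\]
and contributes a top Dyer--Lashof operation on homology. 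Verifying $(d_k\delta_k + \delta_{k-1}d_{k-1})(\epsilon_k) = \epsilon_k$ then reduces to an Adem-type identity among Dyer--Lashof operations, after which Dyer--Lashof naturality spreads the equality to all of $H_*(\Omega^\infty \Sigma L(k))$. Because $\phi$ is an infinite loop self-map of a $2$-local nilpotent space, the resulting homology isomorphism upgrades to a weak equivalence.

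The main obstacle is this generator-level Dyer--Lashof computation. Evaluating $\delta_k(\epsilon_k)$ requires a precise grip on the Kuhn--Mitchell--Priddy splitting maps $\iota_k, p_k$ of \eqref{eq:summand} and on the homological behaviour of the James--Hopf map $JH$, and the Adem-like cancellation with $d_k\delta_k$ has to be read off inside the Steinberg submodule of $H_*((B\ZZ/2)^k)$. Everything that follows---five-lemma bookkeeping, Dyer--Lashof propagation, passage from homology isomorphism to weak equivalence---is essentially formal; this one identity on $\epsilon_k$ is the heart of Kuhn's argument.
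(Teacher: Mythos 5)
The paper does not actually prove this statement; it is cited from Kuhn's paper, and the only indication of the method is the sentence ``Kuhn proved Theorem~1.1 by showing that $\td{d}_k \circ h_k$ is a homology equivalence'' (using, as in Section~4, the weight filtration on $H_*\Omega^\infty \Sigma L(k)$). Your proposal inverts Kuhn's logic: you want to prove the sum formula $\phi = d_k\delta_k + \delta_{k-1}d_{k-1} \simeq \mathrm{id}$ first and then deduce Theorem~\ref{thm:Kuhn}, whereas Kuhn establishes the statement of Theorem~\ref{thm:Kuhn} directly by a homology computation and \emph{deduces} the sum formula from it (this is exactly the matrix argument that appears at the end of the present paper, in the proof of Theorem~\ref{thm:main}). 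The diagram chase you give -- $\phi\circ i_k \simeq i_k\circ(\td{d}_k h_k)$ and $\td{d}_{k-1}\circ\phi \simeq (\td{d}_{k-1}h_{k-1})\circ\td{d}_{k-1}$ -- is correct as a formal identity of homotopy classes, and it does show the two statements are essentially equivalent. So the burden of your argument is carried entirely by your proposed proof of the sum formula, and that is where the gap is.

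The key problem is that $\phi$ is \emph{not} an infinite loop map. The James--Hopf map $JH$ used to build $\delta_k$ is a map of spaces that does not come from a map of spectra, so neither $\delta_k$ nor $\phi$ is stable. Your parenthetical claim that ``every map involved is an infinite loop map coming from a map of spectra, so this really is an endomorphism of a fiber sequence in the stable category'' is therefore false, and more importantly, your plan to verify $\phi_*(\epsilon_k) = \epsilon_k$ on the fundamental class and then let ``Dyer--Lashof naturality spread the equality to all of $H_*(\Omega^\infty\Sigma L(k))$'' does not work: $\delta_k$ does \emph{not} commute with Dyer--Lashof operations, and $\phi_*$ is not determined by its value on a generator by $\mc{R}$-linearity. (There is also a secondary issue: since $\phi$ is a \emph{sum} of maps into an $H$-space, $\phi_*$ is computed through the coproduct and is not simply $(d_k\delta_k)_* + (\delta_{k-1}d_{k-1})_*$ except on primitives.) Handling exactly this failure of stability is the whole point of the weight filtration introduced in Section~\ref{sec:Hlayers}: the James--Hopf map and $\delta_k$ commute with Dyer--Lashof operations only \emph{modulo lower weight}, and one must pass to the associated graded $E_0 H_*$ to get a statement that propagates by $\mc{R}$-naturality. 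Kuhn's proof, and the analogous argument in Section~\ref{sec:Hpsi} of this paper, carry out this filtered computation for $(\td{d}_k h_k)_*$ directly rather than for $\phi_*$. Without introducing the weight filtration (or some substitute), the central step of your proposal does not go through.
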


Kuhn proved Theorem~\ref{thm:Kuhn} by showing that $\td{d}_k \circ h_k$ is a homology equivalence.

We now turn our attention to the Goodwillie tower of the identity functor.
As explained in \cite{AroneDwyerLesh},
precomposing the Goodwillie tower of the identity with the functor
$$ \chi: V \mapsto S^V $$
gives the Weiss tower for the functor $\chi$: we have natural 
equivalences of towers of functors from vector spaces to spaces
$$ \{ P^W_i(\chi)(V) \} \simeq \{ P_i(\Id)(S^V) \}. $$
(See \cite[Lem.~1.2]{Arone}, \cite[Ex.~5.7]{Weiss} for the proof of an almost identical result.)
Let
$$ \phi_k : D_{2^k}(S^V) \rightarrow  BD_{2^{k+1}}(S^V) $$
be the attaching map between consecutive non-trivial layers in the Weiss tower.  Arone-Dwyer-Lesh prove that there exist natural transformations
$$ \psi_k : B^kD_{2^k}(S^V) \rightarrow  B^{k+1}D_{2^{k+1}}(S^V) $$
so that 
$$ \Omega^k \psi_k = \phi_k. $$

Under the Arone-Dwyer equivalence $\Sigma L(k) \simeq \Sigma^k \DD_{2^k}(S^1)$, we get a delooped calculus version of the Kahn-Priddy sequence
\begin{equation*}
S^1 \leftrightarrows D_1(S^1) 
\begin{array}{c} d_0 \\ \leftrightarrows \\ \psi_0 \end{array}
B D_2(S^1)
\begin{array}{c} d_1 \\ \leftrightarrows \\ \psi_1 \end{array}
B^2 D_4(S^1)  
\begin{array}{c} d_2 \\ \leftrightarrows \\ \psi_2 \end{array}
\cdots
\end{equation*}

Our main theorem is the following (Conjecture 1.4 of \cite{AroneDwyerLesh}).

\begin{thm}\label{thm:main}
The sums
$$ d_k \psi_k + \psi_{k-1} d_{k-1} $$
are equivalences.
\end{thm}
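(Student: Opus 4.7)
The plan is to reduce Theorem~\ref{thm:main} to Kuhn's Theorem~\ref{thm:Kuhn} by showing that the Weiss-calculus attaching map $\psi_k$ and Kuhn's map $\delta_k$ agree on mod-$2$ homology. By the Arone-Dwyer equivalence $\Sigma^k\DD_{2^k}(S^1)\simeq\Sigma L(k)$, each space $B^k D_{2^k}(S^1)$ is identified with $\Omega^\infty\Sigma L(k)$, so $\psi_k$ and $\delta_k$ lie in the same mapping set $[\Omega^\infty\Sigma L(k),\,\Omega^\infty\Sigma L(k+1)]$. Once $(\psi_k)_* = (\delta_k)_*$ on $H_*(-;\FF_2)$ is established, additivity of the infinite loop space sum on homology gives $(d_k\psi_k+\psi_{k-1}d_{k-1})_* = (d_k\delta_k+\delta_{k-1}d_{k-1})_*$, which is an isomorphism by Theorem~\ref{thm:Kuhn}. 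Since $\Omega^\infty\Sigma L(k)$ is the zero-space of a connective $2$-local spectrum, an $\FF_2$-homology self-equivalence is an actual equivalence, and Theorem~\ref{thm:main} follows.

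The heart of the proof is therefore the identification $(\psi_k)_*=(\delta_k)_*$ on mod-$2$ homology. I would proceed by first computing $(\phi_k)_*$, where $\phi_k=\Omega^k\psi_k$ is the undelooped Weiss attaching map. The Arone-Mahowald calculation describes $H_*D_{2^k}(S^n)$ as a quotient of the free Dyer-Lashof module on $\iota_n$ generated by completely unadmissible length-$k$ monomials. In this description, a dimensional and naturality analysis forces any natural transformation $D_{2^k}(S^n)\to BD_{2^{k+1}}(S^n)$ to take, up to scalar, the form $Q^I\iota_n\mapsto Q^{2i_1}Q^I\iota_n$---precisely the form of the James-Hopf map $(\Omega^k\delta_k)_*$ extracted by Kuhn via the splitting (\ref{eq:summand}). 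Matching the scalar then reduces to tracking a single nonzero class through the Weiss-calculus/partition-complex model of $\partial_{2^k}(\Id)$; once $(\phi_k)_*=(\Omega^k\delta_k)_*$, a Hurewicz-style comparison across $k$-fold deloopings yields $(\psi_k)_*=(\delta_k)_*$.

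The main obstacle is the non-vanishing of the relevant Dyer-Lashof coefficient in $(\phi_k)_*$: the dimensional count only shows that $(\phi_k)_*$ is a scalar multiple of the doubling operation, and one must verify the scalar is a unit. This is where a concrete Weiss-calculus model of the attaching map---via the partition complex description of the Goodwillie derivatives of the identity, together with the Arone-Dwyer-Lesh construction of $\psi_k$ itself---becomes essential. The appearance of Dyer-Lashof operations in the keywords of the paper signals that this verification is carried out directly on operations in mod-$2$ homology. Given Kuhn's theorem and the Arone-Dwyer identification, the remainder of the argument is essentially formal.
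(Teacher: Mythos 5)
Your high-level strategy --- reduce to Kuhn's Theorem~\ref{thm:Kuhn} by comparing $\psi_k$ and $\delta_k$ on mod-$2$ homology --- is the right one and is the paper's own. But there are two genuine gaps.

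First, you aim to prove $(\psi_k)_* = (\delta_k)_*$ \emph{exactly} on $H_*$. The paper only establishes (Theorem~\ref{thm:psidelta}) that $(\psi_k)_*$ and $(\delta_k)_*$ agree on the \emph{associated graded} of $H_*$ with respect to the weight filtration, and this is not an oversight: both $\delta_k$ and $\psi_k$ are built from James--Hopf maps/splittings of the Goodwillie tower of $\Sigma^\infty\Omega^\infty$, and from the (not uniquely specified) $k$-fold deloopings $\psi_k$ of $\phi_k$ furnished by Arone--Dwyer--Lesh. These choices can perturb the map by lower-weight terms, so exact equality is not available from the data at hand; only the weight-graded comparison is. As the remark after Theorem~\ref{thm:main} notes, the proof is deliberately insensitive to the choice of deloopings, which is incompatible with asserting a strict identity of maps on homology. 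Consequently your ``additivity gives $(d_k\psi_k+\psi_{k-1}d_{k-1})_* = (d_k\delta_k+\delta_{k-1}d_{k-1})_*$, now apply Kuhn'' step cannot be run on the nose; one has to pass to associated graded and then show the filtered homology equivalence $E_0(\td{d}_k)_*\circ E_0(h'_k)_*=\Id$, extract from this that $\td{d}_k\circ h'_k$ is a self-equivalence of the fibers $E_k$ in diagram~(\ref{eq:mydiag}), and conclude via the resulting splittings and the explicit matrix decomposition of $d_k$ and $\psi_k$. This extra bookkeeping is the actual content of the final step in the paper and is missing from your write-up.

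Second, the proposed mechanism for computing $(\phi_k)_*$ --- a ``dimensional and naturality analysis'' showing the attaching map is a scalar multiple of a doubling operation $Q^I\iota_n\mapsto Q^{2i_1}Q^I\iota_n$ --- is too weak and, as stated, also gives the wrong answer: the actual effect is via the operations $\bar{Q}^j$ and the mixed Adem relation, yielding $Q^j\sigma^k\bar Q^{i_1}\cdots\bar Q^{i_k}\iota_1\mapsto\sigma^{k+1}\bar Q^j\bar Q^{i_1}\cdots\bar Q^{i_k}\iota_1$, not a ``doubling''. It is not clear a priori that naturality in $n$ pins down a transformation $D_{2^k}(S^n)\to BD_{2^{k+1}}(S^n)$ up to a scalar, and a rigidity claim of this sort would itself need proof. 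The paper instead identifies $\phi_k$ (after passing to $2i$-th layers of $\Sigma^\infty\Omega^\infty$) with the operadic module structure map $\mu\colon\partial_2(\Id)\wedge\partial_i(F)^{\wedge 2}\to\partial_{2i}(F)$ via the Rector cosimplicial model, the Arone--Ching chain rule, and Ching's bar construction (Section~\ref{sec:quad}, Theorem~\ref{thm:quad}); the homology of $\mu$ is then what defines the $\bar Q^j$ operations and is known from prior work. This is the step your sketch needs but does not supply.
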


\begin{rmk}
Our proof uses no specific properties of the natural transformations $\psi_k$, except for the fact that they are  $k$-fold deloopings of the natural transformations $\phi_k$.  Therefore Theorem~\ref{thm:main} holds independently of the choice of the deloopings.
\end{rmk}

As the maps $\psi_k$ induced the $d_1$-differentials in the spectral sequence (\ref{eq:GSS}) on $\pi_*$, we get the following corollary.

\begin{cor}\label{cor:main}
The Goodwillie spectral sequence for $S^1$ collapses at the $E_2$ page.
\end{cor}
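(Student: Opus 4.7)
The plan is to deduce Corollary~\ref{cor:main} as a purely formal consequence of Theorem~\ref{thm:main}, together with the observation preceding the corollary that the $\psi_k$ induce the $d_1$-differentials of the Goodwillie spectral sequence (\ref{eq:GSS}) on $\pi_*$.

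First I would package the $E_1$-page of (\ref{eq:GSS}) as a cochain complex $(C^\bullet,d)$ by setting $C^k := \pi_* B^k D_{2^k}(S^1)$ (so that in total degree $t+k$ one recovers $E_1^{k,t} = \pi_t D_{2^k}(S^1)$) and $d := (\psi_k)_*$; under this repackaging the differential $d$ matches $d_1$ column by column, and in particular $d^2 = 0$. Regard $(d_k)_*$ as a degree $-1$ operator $s : C^{k+1} \to C^k$. Theorem~\ref{thm:main} then says precisely that, for each $k \geq 1$, the composite
\[ (ds + sd)|_{C^k} = (d_k \psi_k + \psi_{k-1} d_{k-1})_* =: (f_k)_* \]
is an isomorphism of abelian groups, so $s$ realizes the chain self-map $f_\bullet$ as simultaneously a degreewise isomorphism and chain-homotopic to zero.

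A chain self-map that is both a degreewise isomorphism and chain-homotopic to zero must induce both an isomorphism and the zero map on cohomology, and therefore forces $E_2^{k,*} = H^k(C^\bullet,d) = 0$ for every $k \geq 1$. With the $E_2$-page concentrated on the single column $k = 0$, every higher differential $d_r$ ($r \geq 2$) necessarily has either a zero source or a zero target, so the spectral sequence collapses at $E_2$. There is no substantive obstacle beyond Theorem~\ref{thm:main} itself: the corollary is the standard observation that a contracting chain homotopy on the $E_1$-page forces a spectral sequence to collapse there.
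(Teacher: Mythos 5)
Your proposal is correct and spells out exactly what the paper leaves implicit (the paper gives no formal proof, presenting the corollary as an immediate consequence of Theorem~\ref{thm:main} and the observation that the $\psi_k$ realize the $d_1$-differentials of (\ref{eq:GSS}) on $\pi_*$): a contracting chain homotopy on the $E_1$-page kills the cohomology of that page in positive filtration, so the $E_2$-page is concentrated in filtration $0$ and all higher differentials vanish for degree reasons. One small phrasing caveat: you invoke ``degreewise isomorphism and chain-homotopic to zero'' for $f = ds + sd$, but Theorem~\ref{thm:main} only guarantees $f_k$ is an isomorphism for $k\geq 1$, so rather than appealing to the global statement that $H^*(f)$ is simultaneously an isomorphism and zero, one should run the local version (for $x\in\ker d\cap C^k$ with $k\geq1$, set $y=f_k^{-1}(x)$; then $dy=0$ since $f_{k+1}$ is injective, whence $x=f_k(y)=d(sy)\in\operatorname{im}d$), which uses only the isomorphisms supplied by the theorem and yields the same conclusion.
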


Our proof of the main theorem is similar to that of Kuhn in that we analyze the diagram of fiber sequences
\begin{equation}\label{eq:mydiag}
\xymatrix@C-1em{
& E_{-1} \ar@{=}[dl] \ar@/^1pc/@{.>}[dr] &&
E_{0} \ar[dl] \ar@/^1pc/@{.>}[dr]^{h'_0} &&
E_{1} \ar[dl] \ar@/^1pc/@{.>}[dr]^{h'_1} &&
\\
S^1 && 
D_1(S^1) \ar[ll] \ar[ul] && 
B D_2(S^1) \ar[ll]^{d_0} \ar[ul]^{\td{d}_0} &&
B^2 D_4(S^1) \ar[ll]^{d_1} \ar[ul]^{\td{d}_1} &
\cdots
}
\end{equation}
where the maps $h'_k$ are given by the composites
$$ E_k \rightarrow B^k D_{2^k}(S^1) \xrightarrow{\psi_k} B^{k+1}D_{2^{k+1}}(S^1). $$ 
To prove Theorem~\ref{thm:main}, it suffices to show that the composites $\td{d}_k \circ h'_k$ are equivalences. 
We prove this by establishing that these composites induce isomorphisms on mod $2$ homology.  We will do this by endowing $H_* \Omega^\infty \Sigma L(k)$ with a weight filtration, and will prove

\begin{thm}\label{thm:psidelta}
The induced maps 
$$ (\psi_k)_*, (\delta_k)_*: E_0 H_*\Sigma L(k) \rightarrow E_0 H_* \Sigma L(k+1) $$
on the associated graded homology groups with respect to the weight filtration are equal.
\end{thm}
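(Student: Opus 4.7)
The plan is to introduce a weight filtration on $H_*\Omega^\infty \Sigma L(k)$ using the Snaith splitting of $\Sigma^\infty \Omega^\infty \Sigma L(k)$, reducing Theorem~\ref{thm:psidelta} to a piece-by-piece comparison of spectrum-level maps between iterated extended powers.  For a connective spectrum $E$, the Snaith splitting $\Sigma^\infty \Omega^\infty E \simeq \bigvee_{n \ge 1} E^{\wedge n}_{h\Sigma_n}$ gives a natural decomposition $H_*\Omega^\infty E \cong \bigoplus_n H_* E^{\wedge n}_{h\Sigma_n}$, and we take the increasing weight filtration to be $F_{\leq w} H_*\Omega^\infty E = \bigoplus_{n \leq w} H_* E^{\wedge n}_{h\Sigma_n}$.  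Under the Kudo--Araki--May identification of $H_*\Omega^\infty \Sigma L(k)$ as the free commutative $\FF_2$-algebra on admissible Dyer--Lashof monomials $Q^I x$ with $x$ running over a basis of $H_*\Sigma L(k)$, this corresponds to assigning weight $2^r$ to a monomial $Q^{i_1}\cdots Q^{i_r} x$ and extending multiplicatively.

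Both $(\psi_k)_*$ and $(\delta_k)_*$ preserve the weight filtration.  For $\delta_k$ this is immediate: since $\delta_k$ is an infinite loop map it strictly respects the Snaith splitting and decomposes on the associated graded as a direct sum of summand-wise maps $H_*(\Sigma L(k))^{\wedge n}_{h\Sigma_n} \to H_*(\Sigma L(k+1))^{\wedge n}_{h\Sigma_n}$.  For $\psi_k$ the hypothesis $\Omega^k \psi_k = \phi_k$ supplies enough loop-space coherence: although $\psi_k$ need not strictly split with respect to Snaith, any failure to do so lands in strictly lower weight, so the filtration is preserved and one again obtains well-defined summand-wise maps on the associated graded.

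The theorem then reduces to the statement that for every $n \ge 1$ the induced maps on the $n$-th Snaith summand of the associated graded agree.  For $\delta_k$ the summand maps arise from the composite $\Omega^\infty p_{k+1}\circ JH\circ \Omega^\infty \iota_k$: both $\Omega^\infty \iota_k$ and $\Omega^\infty p_{k+1}$ act summand-wise as $(\iota_k)^{\wedge n}_{h\Sigma_n}$ and $(p_{k+1})^{\wedge n}_{h\Sigma_n}$ respectively, while $JH$ has a standard description on Snaith summands controlled by the subgroup $\Sigma_n \wr \Sigma_2 \subset \Sigma_{2n}$.  For $\psi_k$ the summand maps come from the Weiss tower attaching map $\phi_k$; via the Arone--Dwyer equivalence $\Sigma L(k) \simeq \Sigma^k \DD_{2^k}(S^1)$ and the Arone--Mahowald partition-complex model of $\DD_{2^k}(S^1)$, these become natural transformations of extended-power-type spectra.

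The main obstacle is the explicit geometric identification of the two families of summand maps.  I expect this to proceed by realizing both as instances of a single equivariant transfer along the subgroup inclusion $\Sigma_2^{\wr k} \subset \Sigma_2^{\wr(k+1)}$: the James--Hopf side supplies this directly through the Snaith splitting of $\Sigma^\infty Q(S^1)^{\wedge 2^k}_{h\Sigma_2^{\wr k}}$ combined with the Kuhn--Mitchell--Priddy realization of $L(k)$ as a Steinberg summand of a Thom spectrum over $B(\ZZ/2)^k$, while the Arone--Mahowald model on the Weiss side yields the analogous description coming from the Goodwillie derivatives of the identity.  Once both constructions are traced to the same equivariant transfer, the equality of the two induced maps on mod $2$ homology is a standard Dyer--Lashof computation for the homology of iterated $\Sigma_2$-extended powers of $S^1$.
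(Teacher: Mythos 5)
Your overall scaffolding — a weight filtration coming from the (Snaith/Goodwillie) splitting of $\Sigma^\infty\Omega^\infty$, preservation of the filtration by both maps, and reduction to a layer-by-layer comparison — matches the shape of the paper's argument, but there are two concrete problems and one large missing idea.

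First, a concrete error: you assert that $\delta_k$ is an infinite loop map, and use this to conclude that it preserves the weight filtration summand-by-summand. In fact $\delta_k$ is \emph{not} an infinite loop map; it is the composite $\Omega^\infty p_{k+1}\circ JH\circ\Omega^\infty\iota_k$, and the James--Hopf map $JH$ is notoriously not infinite loop. The maps $d_k$ are infinite loop maps, but the $\delta_k$ are not, and indeed $(\delta_k)_*$ is not even an algebra map — only its associated graded for the weight filtration is (Kuhn, Prop.~2.7). So filtration-preservation for $\delta_k$ requires an actual argument, not a structural shortcut.

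Second, your treatment of $\psi_k$ is where the real content lives, and it is missing. Saying ``the hypothesis $\Omega^k\psi_k=\phi_k$ supplies enough loop-space coherence'' does not establish a factorization of $\psi_k$ through $JH$ followed by a spectrum-level map; the paper proves this as a lemma by noting $\Sigma^{k+1}\DD_{2^{k+1}}(S^V)$ is a degree-$2^{k+1}$ functor of $V$, splitting $P^W_{2^{k+1}}(\Sigma^\infty\Omega^\infty\Sigma^k\DD_{2^k}\circ\chi)(\RR)$ into two pieces $[\td\psi_k]_1\vee[\td\psi_k]_2$, and then killing $[\td\psi_k]_1$ using a Nishida-type vanishing $[L(k),L(k+1)]=0$. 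Without this, you have no control over the component of $\psi_k$ that could land in weight $2^k$ rather than $2^{k+1}$, and the claim that ``any failure to split lands in strictly lower weight'' is exactly what needs proof.

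Third, and most seriously: once you reduce to comparing the two spectrum-level maps $(\Sigma^k\DD_{2^k}(S^1))^{\wedge 2}_{h\Sigma_2}\to\Sigma^{k+1}\DD_{2^{k+1}}(S^1)$, your proposed endgame (realize both as a single equivariant transfer along $\Sigma_2^{\wr k}\subset\Sigma_2^{\wr(k+1)}$) is speculative and does not engage with what actually determines the Weiss-tower attaching map. The paper's central technical input is a result about generalized quadratic functors: for any functor whose Goodwillie layers are concentrated in degrees $[i,2i]$, the $2i$th-layer piece $\td\phi_2$ of the adjoint attaching map is, up to a desuspension, the operadic structure map $\mu:\partial_2(\Id)\wedge\partial_i(F)^{\wedge 2}\to\partial_{2i}(F)$. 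Establishing this requires the Rector cosimplicial resolution of the fiber sequence $F\to P_{2i-1}(F)\to BD_{2i}(F)$, the Arone--Ching chain rule and cobar construction, and Ching's bar model for the cooperad $\partial^*(\Id)$. Only after this identification can one invoke the known formula for the $\bar Q^j$ operations on $H_*\DD_{2^k}(S^1)$ and match it against the explicit Dyer--Lashof description of $(\alpha_k)_*$. Your proposal replaces this entire mechanism with ``a standard Dyer--Lashof computation for iterated $\Sigma_2$-extended powers,'' but there is no a priori reason the Weiss-tower attaching map is given by a transfer, and no computation is offered. This is the gap that would stop the proof.
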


This theorem, together with the observation that the maps $d_k$ behave well with respect to the weight filtration, will allow us to deduce Theorem~\ref{thm:main}.

The homological analysis of the maps $\psi_k$ will be performed by observing that, at least up to the weight filtration, the homological behavior of the attaching maps between $i$th and $2i$th layers of any functor $F$ from spaces to spaces is essentially dictated by the homological behavior of the left action of the operad $\partial_*(\Id)$ on the derivatives $\partial_*(F)$ (such left operadic module structure exists by the work of Arone and Ching \cite{AroneChing}, \cite{Ching}).  The $2$-primary homological behavior of this action in the case of the identity functor, when evaluated on spheres, was determined by the author \cite{Behrens}.

This paper is organized as follows.  In Section~\ref{sec:quad} we study functors from spaces to spaces concentrated in degrees $[i, 2i]$, and relate the attaching maps in their Goodwillie tower to the left action of $\partial_*(\Id)$.  In Section~\ref{sec:Hlayers} we recall from \cite{Behrens} the construction of homology operations $\bar{Q}^j$ which act on the stable homology of the derivatives of any functor from spaces to spaces, and their relationship to the Arone-Mahowald computation of the stable homology of the Goodwillie tower of the identity evaluated on spheres \cite{AroneMahowald}.  We also recall some homology calculations of \cite{Kuhn}.  The main theorems are proved in Section~\ref{sec:Hpsi}.

\begin{Ack}
The author would like to extend his heartfelt gratitude to Greg Arone and Nick Kuhn for generously sharing everything they knew about the subject of this paper, engaging in lively correspondence, and hosting a very productive visit to the University of Virginia.  The author also thanks Greg Arone, Bill Dwyer and Kathryn Lesh for informing the author of their alternative approach to the results of this paper.  The author benefited from discussions with Haynes Miller, who suggested he analyze the Rector cosimplicial model for the homotopy fiber, Michael Ching, who explained his work with Arone on operadic modules and chain rules in calculus, and Rosona Eldred, who alerted the author to the relationship between cosimplicial matching objects and cross effects.  Finally, the author is grateful to the referee, for his/her valuable suggestions, and to Mark Mahowald, for asserting to the author in 2003, ``The Goodwillie tower for $S^1$ is the Whitehead conjecture,''  sparking the author's desire to know ``why.''

This work was partially supported by the NSF and a grant from the Sloan foundation. 
\end{Ack}

\section{Generalized quadratic functors}\label{sec:quad}

For the purposes of this section, let $F$ be an analytic finitary homotopy functor
$$ F: \Top_* \rightarrow \Top_* $$
for which there exists an integer $i \ge 1$ so that the Goodwillie layers $D_k(F)$ are trivial unless $i \le k \le 2i$.
We regard such functors as ``generalized quadratic functors,'' as the operadic structure of their derivatives bears similarities to the quadratic case of $i = 1$.  In this section we analyze the relationship between the left action of $\partial_*(\Id)$ on $\partial_*(F)$ and the attaching maps between layers of the Goodwillie tower of $F$.  

There is only one potentially non-trivial component to the left action of $\partial_*(\Id)$ on $\partial_*(F)$: this is the map
\begin{equation}\label{eq:mu}
\mu: \partial_2(\Id) \wedge \partial_i(F) \wedge \partial_i(F) \rightarrow \partial_{2i}(F).
\end{equation}
We remind the reader that $\partial_2(\Id) \simeq S^{-1}$ (with trivial $\Sigma_2$ action).

There is a fiber sequence of functors
\begin{equation}\label{eq:phi}
F(X) \rightarrow P_{2i-1}(F)(X) \xrightarrow{\phi} B D_{2i}(F)(X).
\end{equation}
By \cite[Thm.~4.2]{AroneDwyerLesh}, the functor $P_{2i-1}(F)(X)$ admits a canonical infinite delooping
$$ P_{2i-1}(F)(X) \simeq \Omega^\infty \PP_{2i-1}(F)(X), $$
where $\PP_{2i-1}(F)$ is a spectrum valued functor.  The attaching map $\phi$ has an adjoint
$$ \td{\phi}: \Sigma^\infty \Omega^\infty \PP_{2i-1}(F)(X) \rightarrow \Sigma \DD_{2i}(F)(X). $$
Viewing $\td{\phi}$ as a natural transformation of functors $\Top_* \rightarrow \Sp$, there is an induced natural transformation on $2i$th layers of the Goodwillie towers of these functors: 
\begin{equation}\label{eq:tdphi2}
\td{\phi}_{2}: \DD_{2i}(\Sigma^\infty \Omega^\infty \PP_{2i-1}(F))(X) \rightarrow \Sigma\DD_{2i}(F)(X).
\end{equation}
The following lemma identifies the domain of $\td{\phi}_2$.

\begin{lem}\label{lem:tdphi2}
There is a natural equivalence
$$
\DD_{2i}(\Sigma^\infty \Omega^\infty \PP_{2i-1}(F))(X) \simeq \DD_i(F)(X)^{\wedge 2}_{h\Sigma_2}.  
$$
\end{lem}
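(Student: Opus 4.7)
The plan is to combine the ``outer'' Goodwillie tower of $\Sigma^\infty\Omega^\infty: \Sp \to \Sp$ with the ``inner'' Goodwillie tower in $X$. Write $H := \PP_{2i-1}(F)$; by construction, the Goodwillie derivatives of $H$ as a functor $\Top_* \to \Sp$ satisfy $\partial_k(H) \simeq \partial_k(F)$ for $k \in [i, 2i-1]$ and vanish outside this range. Since the Goodwillie tower of $\Sigma^\infty\Omega^\infty$ has $n$-th layer $E \mapsto E^{\wedge n}_{h\Sigma_n}$, substituting $E = H(X)$ yields a tower of functors of $X$ with fibers $H(X)^{\wedge n}_{h\Sigma_n}$. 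The strategy is to compute the $2i$-th Goodwillie layer in $X$ of each outer fiber and to show that only one fiber contributes.

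The crux is a degree count in $X$. For each outer index $n \geq 1$, the functor $X \mapsto H(X)^{\wedge n}_{h\Sigma_n}$ has Goodwillie derivatives in $X$ concentrated in degrees within $[ni, n(2i-1)]$; this follows from iteratively applying the standard description of the derivatives of a smash product of functors to the Goodwillie tower of $H$, together with the fact that passing to $\Sigma_n$-coinvariants does not change the degree range. For $2i$ to lie in $[ni, n(2i-1)]$ one needs $n \leq 2$; the $n = 1$ range $[i, 2i-1]$ excludes $2i$; and $n \geq 3$ gives $ni \geq 3i > 2i$. Hence only the $n = 2$ outer fiber can contribute.

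For $n = 2$, the Goodwillie tower of the functor $X \mapsto H(X)^{\wedge 2}$ has $k$-th layer $\bigvee_{k_1 + k_2 = k} \DD_{k_1}(F)(X) \wedge \DD_{k_2}(F)(X)$, carrying the evident $\Sigma_2$-swap action. Since $k_1, k_2 \in [i, 2i-1]$ and $k_1 + k_2 = 2i$ forces $k_1 = k_2 = i$, the $2i$-th layer collapses to $\DD_i(F)(X) \wedge \DD_i(F)(X)$ with the swap action, and passing to $\Sigma_2$-coinvariants yields $\DD_i(F)(X)^{\wedge 2}_{h\Sigma_2}$, exactly as claimed.

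The main point requiring care is the interchange of the outer and inner towers: one must justify that the $2i$-excisive approximation in $X$ of $\Sigma^\infty\Omega^\infty H(X)$ assembles correctly from the $2i$-excisive approximations of the outer fibers, and that contributions from $n = 1$ and $n \geq 3$ truly vanish at the $2i$-excisive stage. This is essentially the content of the Arone-Ching chain rule for derivatives of composites; alternatively, it can be verified by induction up the outer tower, using that a homotopy fiber sequence of functors induces a homotopy fiber sequence on each Goodwillie layer. A mild convergence remark is also needed: since $H$ is at least $i$-reduced, $H(X)$ is highly connected for connected $X$, so the outer tower converges sufficiently rapidly for the argument to apply on the relevant connectivity range.
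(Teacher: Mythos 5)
Your proposal is correct and is essentially the same argument as the paper's: both invoke the Arone--Ching chain rule for $\Sigma^\infty\Omega^\infty \circ \PP_{2i-1}(F)$ together with the degree count showing that, since $\partial_*(\PP_{2i-1}F)$ is supported in $[i,2i-1]$, only the quadratic part of $\partial_*(\Sigma^\infty\Omega^\infty)$ with both inner indices equal to $i$ can contribute to the $2i$-th derivative. The paper phrases this directly as a computation of the composition product $(\partial_*(\Sigma^\infty\Omega^\infty)\circ\partial_*(\PP_{2i-1}F))_{2i}$ at the level of symmetric sequences and then evaluates on $X$, whereas you unpack the same degree count through the layers of the outer $\Sigma^\infty\Omega^\infty$ tower; the content is identical.
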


\begin{proof}
The derivatives of the functor $\Sigma^\infty \Omega^\infty$ are well known to be given by
$$ \partial_i(\Sigma^\infty \Omega^\infty) =  S $$
with trivial $\Sigma_i$-action (see, for instance, \cite[Ex.~6.2]{KuhnChrom}).
We have (by the chain rule \cite{AroneChing})
\begin{align*}
\partial_{2i} (\Sigma^\infty \Omega^\infty \PP_{2i-1}(F)) & \simeq (\partial_*(\Sigma^\infty \Omega^\infty) \circ \partial_*(\PP_{2i-1}F))_{2i} \\
& \simeq {\Sigma_{2i}}_+ \underset{\Sigma_2 \wr \Sigma_i}{\wedge} \partial_{i} (F)^{\wedge 2}.
\end{align*}
We therefore have
\begin{align*}
\DD_{2i}(\Sigma^\infty\Omega^\infty \PP_{2i-1}(F))(X) 
& \simeq \partial_{2i} (\Sigma^\infty \Omega^\infty \PP_{2i-1}(F)) \wedge_{h\Sigma_{2i}} X^{\wedge 2i} \\ 
& \simeq [{\Sigma_{2i}}_+ \underset{\Sigma_2 \wr \Sigma_i}{\wedge} \partial_{i} (F)^{\wedge 2}] \wedge_{h\Sigma_{2i}} X^{\wedge 2i} \\
& \simeq \DD_i(F)(X)^{\wedge 2}_{h\Sigma_2}.  
\end{align*}
\end{proof}

Lemma~\ref{lem:tdphi2} allows us to regard $\td{\phi}_2$ as a map
$$ \td{\phi}_2:  \DD_i(F)(X)^{\wedge 2}_{h\Sigma_2}    \rightarrow \Sigma \DD_{2i}(F)(X). $$
Our main observation is the following.

\begin{thm}\label{thm:quad}
The map
$$ \Sigma^{-1}\td{\phi}_2: \Sigma^{-1} \DD_i(F)(X)^{\wedge 2}_{h\Sigma_2}    \rightarrow \DD_{2i}(F)(X) $$
is homotopic to the composite
\begin{align*} \Sigma^{-1} \DD_i(F)(X)^{\wedge 2}_{h\Sigma_2} 
& \simeq (\partial_2(\Id) \wedge \partial_i(F)^{\wedge 2} \wedge X^{\wedge 2i})_{h\Sigma_2 \wr \Sigma_i} \\ 
& \xrightarrow{\mu \wedge 1}  (\partial_{2i}(F) \wedge X^{\wedge 2i})_{h\Sigma_{2i}} \\
& \simeq \DD_{2i}(F)(X).
\end{align*}
\end{thm}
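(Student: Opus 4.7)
The approach is to trace both maps back to a common source, namely the $2i$-th Goodwillie layer of the attaching map $\phi$. First, I would observe that the hypothesis on $F$ forces $\DD_k(F)\simeq\ast$ for $k<i$, so $\PP_{2i-1}(F)\simeq\DD_i(F)$ as spectrum-valued functors, and $\td\phi$ becomes a natural transformation $\Sigma^\infty\Omega^\infty\DD_i(F)(X)\to\Sigma\DD_{2i}(F)(X)$ of functors $\Top_*\to\Sp$. Taking $D_{2i}$ in the variable $X$, the source is identified with $\DD_i(F)(X)^{\wedge 2}_{h\Sigma_2}$ by the chain rule computation already carried out in Lemma~\ref{lem:tdphi2}, while the target is already homogeneous of degree $2i$ in $X$.

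The essential content of the theorem is then to identify $\td\phi_2$, under these identifications, with the operadic structure map $\mu$, up to the shift $\Sigma^{-1}$ arising from $\partial_2(\Id)\simeq S^{-1}$. I would prove this by appealing to the Arone--Ching construction of the left $\partial_*(\Id)$-module structure on $\partial_*(F)$: in their framework this structure is built precisely so as to encode the attaching data of the Goodwillie tower via a bar/cosimplicial construction, and the $2$-ary piece $\mu$ is extracted from the attaching map between consecutive non-trivial layers in exactly the manner described above. In the two-layer setting of the present section, this is the only non-trivial piece of the operadic action, and the identification can be checked by unwinding the definitions on both sides. A convenient technical reduction is to verify the equality first in the universal case $F=\Id$, where $\mu$ is the operad composition in $\partial_*(\Id)$ and $\td\phi$ is the attaching map of the Goodwillie tower of the identity, and then to transfer the result to general $F$ by the naturality of both the attaching map $\phi$ and the operadic module structure under natural transformations of functors.

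The main obstacle is matching Arone--Ching's abstract description of $\mu$, phrased in terms of bar constructions and Koszul duality, with the concrete Goodwillie tower attaching map $\td\phi_2$. Tracking the suspension factor $\Sigma^{-1}$ dual to $\partial_2(\Id)\simeq S^{-1}$ through the $(\Sigma^\infty,\Omega^\infty)$ adjunction used to form $\td\phi$ and through the chain rule computation of the source requires careful bookkeeping; and choosing the right level of generality --- direct identification versus reduction to $F=\Id$ followed by a naturality argument --- is where the technical work of the proof is concentrated.
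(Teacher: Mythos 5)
Your proposal correctly identifies the right framework (Arone--Ching's recovery of $\partial_*(F)$ from the cooperadic cobar construction on $\partial_*(\Sigma^\infty F)$, together with Ching's bar model carrying the $\partial^*(\Id)$-coaction), which is exactly what the paper uses. But there are two concrete problems.

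First, the opening reduction is wrong: the hypothesis only forces $D_k(F)\simeq\ast$ for $k<i$ and $k>2i$, so $P_{2i-1}(F)$ may have non-trivial layers at all degrees $i\le k\le 2i-1$, and $\PP_{2i-1}(F)\not\simeq\DD_i(F)$ in general. The identification of the source of $\td\phi_2$ with $\DD_i(F)(X)^{\wedge 2}_{h\Sigma_2}$ still holds, but for the chain-rule reason in Lemma~\ref{lem:tdphi2} (the only partition of $2i$ into parts $\ge i$ and $\le 2i-1$ is $i+i$), not because the intermediate layers vanish.

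Second, the proposed reduction to the ``universal case $F=\Id$'' cannot be set up as stated: $\Id$ does not satisfy the standing hypothesis that $D_k$ is concentrated in degrees $[i,2i]$, so the theorem does not apply to it, and there is no obvious span of natural transformations transporting the statement from $\Id$ to a general $F$ of the prescribed shape. More importantly, the phrase ``the identification can be checked by unwinding the definitions'' conceals the actual content of the proof. The paper does two non-trivial computations that your sketch omits: (a) it runs $\partial_{2i}(\Sigma^\infty-)$ through the Rector cosimplicial model $T^\bullet$ for the fiber sequence $F\to P_{2i-1}(F)\to BD_{2i}(F)$, uses a connectivity estimate (Lemma~\ref{lem:connectivity}) to truncate the $\Tot$-tower, and identifies the cosimplicial structure maps so as to produce the fiber sequence~(\ref{eq:d2ifiber}) with $\partial_{2i}(\td\phi_2)$ as connecting map; and (b) it writes out the explicit pushout square in Ching's bar model $B(1_*,\mr{Comm}_*,\partial^*(\Sigma^\infty F))$ in degree $2i$ and reads off that the $\partial^*(\Id)$-coaction is precisely the Spanier--Whitehead dual of that connecting map. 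Without carrying out (a) and (b) --- or an equivalent calculation --- the claimed identification $\mu=\Sigma^{-1}\partial_{2i}(\td\phi_2)$ is unsubstantiated, and that identification is the entire theorem. You should also double-check the intermediate Lemma~\ref{lem:dSigmainfty}-style step: one needs to know $\xi$ is the comodule structure map (the paper's Lemma~2.7) before the bar-construction pushout argument can be applied.
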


The proof of Theorem~\ref{thm:quad} will occupy the remainder of this section, and will require a series of  supporting lemmas.  At the heart of the argument is the following idea: given the attaching map $\phi$, compute the induced left action of $\partial_*(\Id)$ on $\partial_*(F)$.  This will result in a formula relating $\td{\phi}_2$ and $\mu$.

To compute the left action we use the machinery of Arone and Ching. 
For a functor $G: \Top_* \rightarrow \Top_*$, Arone and Ching \cite{AroneChing} show that $\partial_*(\Sigma^\infty G)$ is a left comodule over the commutative cooperad $\mr{Comm}_*$, and moreover show that $\partial_*(G)$ can be recovered from the cooperadic cobar construction
$$ \partial_*(G) \simeq  C(1_*, \mr{Comm}_*, \partial_*(\Sigma^\infty G)). $$
The cobar construction is Spanier-Whitehead dual to the bar construction 
\begin{equation}\label{eq:barcobar}
C(1_*, \mr{Comm}_*, \partial_*(\Sigma^\infty G)) \simeq B(1_*, \mr{Comm}_*, \partial^*(\Sigma^\infty G) )^\vee.
\end{equation}
Here, following \cite{AroneChing}, 
$$ \partial^*(\Sigma^\infty G) := \partial_*(\Sigma^\infty G)^\vee, $$ 
and must be interpreted as a symmetric sequence of pro-spectra for a general functor $G$.  Note that in in the right-hand side of (\ref{eq:barcobar}), we have abusively used $\mr{Comm}_*$ to also denote the commutative \emph{operad}, as it has the same underlying symmetric sequence as the commutative cooperad.
Ching's topological model for the bar construction \cite{Ching}
$$ B(1_*, \mr{Comm}_*, \partial^*(\Sigma^\infty G)) $$
carries a left coaction by the cooperad
$$ B(1_*, \mr{Comm}_*, 1_*) \simeq \partial^*(\Id). $$
The action of $\partial_*(\Id)$ induced on the dual recovers the left action of $\partial_*(\Id)$ on $\partial_*(G)$.    The proof of Theorem~\ref{thm:quad} will follow from an analysis of how this process plays out, when applied to our functor $F$.

The first step to the approach outlined above is to compute $\partial_*(\Sigma^\infty F)$.  The strategy is to use the fiber sequence (\ref{eq:phi}).  Note that as the functors $P_{2i-1}(F)$ and $BD_{2i}(F)$ factor through the category of spectra, the derivatives of $\Sigma^\infty$ of these functors are easily deduced from the derivatives of $\Sigma^\infty \Omega^\infty$ by applying the chain rule \cite{AroneChing}.
 
Since $F$ is analytic, for $X$ sufficiently highly connected there is a natural equivalence
\begin{equation}\label{eq:Rector}
 \Sigma^\infty F(X) \simeq h{\Tot} \Sigma^\infty (BD_{2i}(F)(X)^{\times \bullet} \times P_{2i-1}(F)(X))
\end{equation}
where 
$$ T^\bullet (X) := BD_{2i}(F)(X)^{\times \bullet} \times P_{2i-1}(F)(X) $$ 
is the Rector cosimplicial model \cite{Rector} for the homotopy fiber (\ref{eq:phi})
\begin{equation*}
\xymatrix@C-1.6em{
P_{2i-1}(F)(X) 
\ar@<1ex>[r]
\ar@<-1ex>[r] &
BD_{2i}(F)(X) \times P_{2i-1}(F)(X)
\ar[l]
\ar@<2ex>[r]
\ar@<0ex>[r]
\ar@<-2ex>[r] &
BD_{2i}(F)(X)^{\times 2} \times P_{2i-1}(F)(X) \cdots
\ar@<1ex>[l]
\ar@<-1ex>[l] 
}
\end{equation*}
and $h{\Tot}$ denotes homotopy totalization.

In preparation for our arguments, we briefly discuss some general properties of the homotopy $\Tot$-tower.  For a cosimplicial spectrum $Z^\bullet$, this tower takes the form:
$$ h\Tot^0 Z^\bullet \leftarrow h \Tot^1 Z^\bullet \leftarrow h \Tot^2 Z^\bullet \leftarrow \cdots . $$
Let $\mr{fib}^n Z^\bullet$ denote the homotopy fiber
$$  \mr{fib}^n Z^\bullet \rightarrow h\Tot^n Z^\bullet \rightarrow h\Tot^{n-1} Z^\bullet. $$
There are homotopy fiber sequences
\begin{equation*}
 \mr{fib}^n Z^\bullet \rightarrow Z^n \rightarrow \holim_{\substack{[n] \twoheadrightarrow [k] \\ k < n}} Z^k
\end{equation*}
Since a surjection $[n] \twoheadrightarrow [k]$ is uniquely determined by specifying the subset of arrows of 
$$ [n] = (0 \to 1 \to 2 \to \cdots \to n) $$
which go to identity arrows in $[k]$, the fiber $\mr{fib}^n Z^\bullet$ is computed as the total homotopy fiber of an $n$-cubical diagram
\begin{equation}\label{eq:matching}
\mr{fib}^n Z^\bullet \simeq h \mr{tfiber} \left\{ Z^{n-\abs{S}} \right\}_{S \subseteq \ul{n} }
\end{equation}
where the maps in the $n$-cubical diagram are given by codegeneracy maps of $Z^\bullet$.

Since $F$ was assumed to be analytic, there exists $\rho$, $q$ such that on sufficiently highly connected spaces $X$ the natural transformations
$$ F(X) \rightarrow P_k(F)(X) $$
are $(q-k(\rho-1)+(k+1)\mr{conn}(X))$-connected.  We will need the following connectivity estimate.

\begin{lem}\label{lem:connectivity}
On sufficiently highly connected spaces $X$, the map
$$ \Sigma^\infty F(X) \rightarrow h\Tot^n \Sigma^{\infty} T^\bullet (X) $$
is $(n+1)(q-(2i-1)(\rho-1))+1+2i(n+1)\mr{conn}(X)$-connected.
\end{lem}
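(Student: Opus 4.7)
The plan is to argue by induction on $n$ using the homotopy fiber sequences
\[ \mr{fib}^n \Sigma^\infty T^\bullet(X) \to h\Tot^n \Sigma^\infty T^\bullet(X) \to h\Tot^{n-1}\Sigma^\infty T^\bullet(X) \]
built into the Tot tower. In the base case $n=0$ we have $h\Tot^0 \Sigma^\infty T^\bullet(X) = \Sigma^\infty P_{2i-1}(F)(X)$, and the connectivity of $\Sigma^\infty F(X) \to \Sigma^\infty P_{2i-1}(F)(X)$ is controlled directly by the analyticity hypothesis on $F$ applied to the functor $P_{2i-1}(F)$.

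For the inductive step, it will suffice to bound $\mr{conn}(\mr{fib}^n \Sigma^\infty T^\bullet(X))$. Using the cubical formula~(\ref{eq:matching}), this fiber is the total homotopy fiber of an $n$-cube of spectra whose vertices are $\Sigma^\infty T^{n-\abs{S}}(X)$ and whose edges are codegeneracies. In the Rector model these codegeneracies collapse factors of $BD_{2i}(F)(X)$ to the basepoint, and the cosimplicial identities $s^i d^i = \mr{id}$ exhibit them as split retractions with corresponding cofaces serving as sections. After applying $\Sigma^\infty$ this splitting lets one identify the total homotopy fiber of the cube with $\Sigma^\infty$ of a smash product carrying $n$ factors of $BD_{2i}(F)(X)$, smashed with a disjoint-basepoint version of $P_{2i-1}(F)(X)$.

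Next, I would bound the connectivity of $BD_{2i}(F)(X)$ from below using analyticity: because $D_{2i}(F)(X)$ sits as the fiber of $P_{2i}(F)(X) \to P_{2i-1}(F)(X)$, the estimate for $F \to P_{2i-1}(F)$ gives $\mr{conn}(BD_{2i}(F)(X)) \ge q - (2i-1)(\rho-1) + 2i\,\mr{conn}(X)$, at least once $X$ is sufficiently highly connected for these bounds to be positive. An $n$-fold smash product of $c$-connected spaces is $(n(c+1)-1)$-connected, so $\mr{conn}(\mr{fib}^n \Sigma^\infty T^\bullet(X))$ enjoys a lower bound that is linear in $n$ with exactly the leading coefficient prescribed by the statement; combining this with the inductive hypothesis on $h\Tot^{n-1}$ through the long exact sequence produces the claimed $(n+1)$-fold bound for $h\Tot^n$.

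The main obstacle will be rigorously identifying the total homotopy fiber of the cube with the explicit smash product above: this requires unpacking the Rector codegeneracies carefully enough to verify that, after $\Sigma^\infty$, the splittings assemble compatibly so the cube becomes strongly cocartesian with total fiber reducing to a single smash product. Once this identification is in hand, the remaining task is routine bookkeeping of constants to match the explicit expression in the statement.
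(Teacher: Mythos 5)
Your identification of $\mr{fib}^n \Sigma^\infty T^\bullet(X)$ with $\Sigma^\infty BD_{2i}(F)(X)^{\wedge n}\wedge P_{2i-1}(F)(X)_+$ and your connectivity estimate for $BD_{2i}(F)(X)$ agree with the paper, and the total-fiber identification that you flag as the ``main obstacle'' is actually routine: it falls out of the $n$-cube formula~(\ref{eq:matching}) together with the stable splitting $\Sigma^\infty(Y\times Y')\simeq \Sigma^\infty Y\vee\Sigma^\infty Y'\vee \Sigma^\infty(Y\wedge Y')$, by a short induction on cube dimension. The genuine gap is in the final assembly of connectivity estimates.

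You propose to induct on $n$ using the fiber sequence $\mr{fib}^n\to h\Tot^n\to h\Tot^{n-1}$, feeding in the bound on $\mr{conn}(\mr{fib}^n)$ together with the inductive hypothesis on $\Sigma^\infty F(X)\to h\Tot^{n-1}$. This cannot close. Writing $G_n$ for the homotopy fiber of $\Sigma^\infty F(X)\simeq h\Tot\Sigma^\infty T^\bullet(X)\to h\Tot^n\Sigma^\infty T^\bullet(X)$, one has a fiber sequence $G_n\to G_{n-1}\to \mr{fib}^n$; knowing that $G_{n-1}$ and $\mr{fib}^n$ are each roughly $nc$-connected, where $c=q-(2i-1)(\rho-1)+2i\,\mr{conn}(X)$, only forces $G_n$ to be roughly $(nc-1)$-connected, far short of the $(n+1)c$ the lemma demands. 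The point is that $\mr{fib}^n$ governs the passage from $\Tot^{n-1}$ to $\Tot^n$, whereas the connectivity of $\Sigma^\infty F(X)\to h\Tot^n$ is governed by the fibers $\mr{fib}^m$ for $m\ge n+1$. The argument that works --- and is the paper's --- is not an induction on $n$: one establishes the uniform estimate $\mr{conn}(\mr{fib}^m)\ge mc$ for \emph{every} $m$, notes that this is $\ge(n+1)c$ for all $m\ge n+1$, and concludes by passing up the Tot tower (permissible since $c>0$ makes these connectivities diverge) that $G_n$ is $(n+1)c$-connected, hence that the map is $((n+1)c+1)$-connected.
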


\begin{proof}
Using the splitting
$$ \Sigma^\infty (Y \times Y') \simeq \Sigma^\infty Y \vee \Sigma^\infty Y' \vee \Sigma^\infty Y \wedge Y' $$
for $Y,Y' \in \Top_*$, one inductively computes from (\ref{eq:matching}) that 
$$ \mr{fib}^n \Sigma^\infty T^\bullet(X) \simeq \Sigma^\infty BD_{2i}(F)(X)^{\wedge n} \wedge P_{2i-1}(F)(X)_+. $$ 
For $\mr{conn}(X) \ge \rho$ the map
\begin{align*}
P_{2i}(F)(X) \rightarrow P_{2i-1}(F)(X)
\end{align*}
is $q-(2i-1)(\rho-1)+2i \cdot \mr{conn}(X)$-connected.  Therefore the fiber $D_{2i}(F)(X)$ is $q-(2i-1)(\rho-1)+2i \cdot \mr{conn}(X)-1$-connected, and the space $BD_{2i}(F)(X)$ is $q-(2i-1)(\rho-1)+2i \cdot \mr{conn}(X)$-connected.  Let $X$ be highly enough connected to make this number positive.  Then $\mr{fib}^n \Sigma^\infty T^\bullet$ is $n(q-(2i-1)(\rho-1))+2in \cdot \mr{conn}(X)$-connected.  We deduce that the map
$$ \Sigma^\infty F(X) \simeq h\Tot \Sigma^\infty T^\bullet (X) \rightarrow h\Tot^n \Sigma^\infty T^\bullet (X) $$
is $(n+1)(q-(2i-1)(\rho-1))+1+2i(n+1)\mr{conn}(X)$-connected.
\end{proof}

We are now able to identify $\partial_*(\Sigma^\infty F)$ for $* \le 2i$.

\begin{lem}\label{lem}\label{lem:dSigmainfty}
There are equivalences 
\begin{align*}
\partial_k(\Sigma^\infty F) & \simeq \partial_k(F), \quad \text{for $i \le k < 2i$,} \\
\partial_{2i}(\Sigma^\infty F) & \simeq \mr{fiber}\left(
{\Sigma_{2i}}_+ \underset{\Sigma_2 \wr \Sigma_i}{\wedge} \partial_{i} (F)^{\wedge 2} \xrightarrow{\partial_{2i}(\td{\phi}_2)} \Sigma \partial_{2i}(F) \right).
\end{align*}
\end{lem}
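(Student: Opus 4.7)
The plan is to compute $\partial_k(\Sigma^\infty F)$ for $i \le k \le 2i$ via the Rector cosimplicial model (\ref{eq:Rector}) and its $h\Tot$-tower. Since $F$ is analytic, the equivalence $\Sigma^\infty F(X) \simeq h\Tot \Sigma^\infty T^\bullet(X)$ on sufficiently highly connected $X$ implies $\partial_k(\Sigma^\infty F) \simeq \partial_k(h\Tot \Sigma^\infty T^\bullet)$ for every $k$, and so it suffices to analyze the derivatives of the tower $\{h\Tot^n \Sigma^\infty T^\bullet\}$.

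I would first compute the derivatives of each stage. For the bottom stage $h\Tot^0 = \Sigma^\infty T^0 = \Sigma^\infty \Omega^\infty \PP_{2i-1}(F)$, the Arone--Ching chain rule (applied exactly as in the proof of Lemma~\ref{lem:tdphi2}) gives $\partial_k(\Sigma^\infty T^0) \simeq \partial_k(F)$ for $i \le k \le 2i-1$ and $\partial_{2i}(\Sigma^\infty T^0) \simeq {\Sigma_{2i}}_+ \wedge_{\Sigma_2 \wr \Sigma_i} \partial_i(F)^{\wedge 2}$. For the higher fibers $\mr{fib}^n \Sigma^\infty T^\bullet \simeq \Sigma^\infty BD_{2i}(F)^{\wedge n} \wedge P_{2i-1}(F)_+$ computed in the proof of Lemma~\ref{lem:connectivity}, the same chain rule applied to $\Sigma^\infty BD_{2i}(F) \simeq \Sigma^\infty \Omega^\infty \Sigma \DD_{2i}(F)$ shows that each factor contributes minimum degree $2i$, so $\partial_k(\mr{fib}^n)$ vanishes whenever $k < 2in$. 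In particular, for $k \le 2i$ only the $n \le 1$ stages of the tower contribute, and inspection of the wedge summands gives $\partial_{2i}(\mr{fib}^1) \simeq \Sigma \partial_{2i}(F)$, the mixed summand $\Sigma^\infty(BD_{2i}(F) \wedge P_{2i-1}(F))$ having derivatives starting in degree $3i > 2i$.

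Assembling the tower: for $k < 2i$ all fibers above stage $0$ are derivatively trivial, so $\partial_k(\Sigma^\infty F) \simeq \partial_k(\Sigma^\infty T^0) \simeq \partial_k(F)$, giving the first claim. For $k = 2i$ the tower stabilizes at $h\Tot^1$, and the long exact sequence of the fibration $\mr{fib}^1 \to h\Tot^1 \to h\Tot^0$ exhibits $\partial_{2i}(\Sigma^\infty F)$ as the fiber of a boundary map
$$ \partial_{2i}(\Sigma^\infty T^0) \longrightarrow \Sigma \partial_{2i}(F). $$

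The main obstacle is identifying this boundary with $\partial_{2i}(\td{\phi}_2)$. I plan to do this by tracing the cosimplicial structure of $T^\bullet$: the boundary map in the $\Tot$-tower is governed by the alternating difference $d^0 - d^1$ of the cofaces $T^0 \rightrightarrows T^1$, which in the Rector model are $d^0 = (\phi, \mr{id})$ and $d^1 = (\ast, \mr{id})$. After smash-decomposing $\Sigma^\infty(BD_{2i}(F) \times P_{2i-1}(F))$, the $\Sigma^\infty BD_{2i}(F)$ wedge summand of $\mr{fib}^1$ receives exactly $\Sigma^\infty \phi$ from this difference, and naturality of the $\Sigma^\infty \dashv \Omega^\infty$ counit identifies the induced map on $\partial_{2i}$ with the second Goodwillie layer $\partial_{2i}(\td{\phi}_2)$ of the adjoint $\td{\phi}$. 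This bookkeeping between cofaces, the wedge decomposition of $\Sigma^\infty$ of a product, and the adjoint counit is the step that will require the most care.
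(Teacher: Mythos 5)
Your proposal follows the same route as the paper: resolve $\Sigma^\infty F$ by the Rector cosimplicial model, compute derivatives of the bottom stage via the Arone--Ching chain rule, observe that the higher $\mr{fib}^n$ are irrelevant in degrees $\le 2i$, and identify the boundary map out of $\partial_{2i}(\Sigma^\infty T^0)$ with $\partial_{2i}(\td{\phi}_2)$ by tracking the coface $\delta$ through the wedge decomposition of $\Sigma^\infty(BD_{2i}(F)\times P_{2i-1}(F))$ and the factorization $\td{\phi} = \epsilon \circ \Sigma^\infty\phi$. The one place where you diverge slightly is in ruling out the contribution of $\mr{fib}^n$ for $n \ge 2$: you argue that each such fiber has derivatives concentrated in degree $\ge 2in$, whereas the paper deduces directly from the connectivity estimate of Lemma~\ref{lem:connectivity} that $\Sigma^\infty F$ agrees with $h\Tot^0$ (resp.\ $h\Tot^1$) to order $2i-1$ (resp.\ $2i$), which is the precise Goodwillie-theoretic statement needed to conclude $\partial_k(\Sigma^\infty F)\simeq \partial_k(h\Tot^m \Sigma^\infty T^\bullet)$. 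Your degree-of-derivatives observation is morally the same thing, but to pass from ``derivatives of the fibers vanish below $2in$'' to ``the derivative of the full $h\Tot$ is computed at a finite stage'' you still need the convergence supplied by that connectivity lemma, so the argument you sketch is best read as running through Lemma~\ref{lem:connectivity} rather than replacing it. With that caveat the outline is sound and identical in substance to the paper's proof.
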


\begin{proof}
Recall from the proof of Lemma~\ref{lem:tdphi2} that we used the chain rule to deduce
$$ \partial_{2i} (\Sigma^\infty P_{2i-1}(F)) \simeq {\Sigma_{2i}}_+ \underset{\Sigma_2 \wr \Sigma_i}{\wedge} \partial_{i} (F)^{\wedge 2}. $$
The same argument shows that: 
\begin{align*}
\partial_k (\Sigma^\infty P_{2i-1}(F)) & \simeq \partial_k(F), \quad \text{for $i \le k < 2i$}, \\
\partial_{2i}(\Sigma^\infty BD_{2i}(F)) & \simeq \Sigma \partial_{2i}(F).
\end{align*}
By Lemma~\ref{lem:connectivity}, the functors $\Sigma^\infty F$ and $h \Tot^0 \Sigma^\infty T^\bullet$ agree to order $2i-1$ and the functors $\Sigma^\infty F$ and $h \Tot^1 \Sigma^\infty T^\bullet$ agree to order $2i$.  It follows \cite{Goodwillie} that
\begin{align*}
\partial_k(\Sigma^\infty F) & \simeq h \Tot^0 \partial_k(\Sigma^\infty T^\bullet), \quad k < 2i, \\
\partial_{2i}(\Sigma^\infty F) & \simeq h \Tot^1 \partial_{2i}(\Sigma^\infty T^\bullet).
\end{align*}
This immediately implies the first equivalence of the lemma.

To prove the second equivalence, we must compute $h \Tot^1 \partial_{2i}(\Sigma^\infty T^\bullet)$.
The $\partial_{2i}$ computations above imply that 
\begin{equation}\label{eq:cosimplicial}
 \partial_{2i}(\Sigma^\infty T^s) \simeq \underbrace{\Sigma \partial_{2i}(F) \vee \cdots \vee \Sigma \partial_{2i}(F)}_{s} \: \vee \: {\Sigma_{2i}}_+ \underset{\Sigma_2 \wr \Sigma_i}{\wedge} \partial_{i} (F)^{\wedge 2}.
\end{equation}

We claim that under the equivalences (\ref{eq:cosimplicial}), the last coface map in the cosimplicial $\Sigma_{2i}$-spectrum $\partial_{2i}(\Sigma^\infty T^\bullet)$  from level $0$ to level $1$ is given by
$$ d^1 = \partial_{2i}(\td{\phi}_2) \times 1, $$
and the codegeneracy map from level $1$ to level $0$ is the map which collapses out the wedge summand $\Sigma \partial_{2i}(F)$.  The second equivalence of the lemma follows immediately from this claim.

To establish the claim concerning the cosimplicial structure maps above, 
observe that the $d^{1}$ map from level $0$ to level $1$ in the cosimplicial functor $\Sigma^\infty T^\bullet(X)$ is the composite
\begin{align*}
\delta: \Sigma^{\infty} P_{2i-1}(F)(X) 
& \xrightarrow{\Sigma^\infty \Delta} \Sigma^{\infty} \left( P_{2i-1}(F)(X) \times P_{2i-1}(F)(X) \right) \\  
& \xrightarrow{\Sigma^\infty \phi \times 1} \Sigma^{\infty} \left( BD_{2i}(F)(X) \times P_{2i-1}(F)(X) \right).
\end{align*}
The induced map $\partial_{2i}(\delta)$ is determined by the composites with the projections onto the wedge summands of 
$$ \partial_{2i}(\Sigma^{\infty} ( BD_{2i}(F) \times P_{2i-1}(F)) \simeq 
\Sigma \partial_{2i}(F) \vee {\Sigma_{2i}}_+ \underset{\Sigma_2 \wr \Sigma_i}{\wedge} \partial_i(F)^{\wedge 2}_{h\Sigma_2}.
$$
Composing $\delta$ with the projection onto the second factor gives the identity, and this implies that the second component of $\partial_{2i}(\delta)$ is the identity.  Composing $\delta$ with the projection onto the first factor is the natural transformation
$$ \Sigma^\infty \phi: \Sigma^\infty P_{2i-1}(F)(X) \rightarrow \Sigma^\infty BD_{2i}(F)(X). $$
Using the fact that the adjoint $\td{\phi}$ is the composite
$$ \Sigma^\infty \Omega^{\infty} \PP_{2i-1}(F)(X) \xrightarrow{\Sigma^\infty \phi} \Sigma^\infty \Omega^\infty \Sigma \DD_{2i}(F)(X) \xrightarrow{\epsilon} \Sigma \DD_{2i}(F)(X), $$
together with the fact that $\epsilon$ is a $\partial_{2i}$-equivalence, we deduce that the first component of $\partial_{2i}(\delta)$ is $\partial_{2i}(\td{\phi}_2)$, as desired.
The claim concerning the codegeneracy of $\partial_{2i}(\Sigma^\infty T^\bullet)$ follows immediately from the fact that the codegeneracy from level $1$ to level $0$ of the cosimplicial functor $T^\bullet(X)$ projects away the first component.
\end{proof}

The last equivalence of Lemma~\ref{lem:dSigmainfty} gives a fiber sequence of $\Sigma_{2i}$-spectra:
\begin{equation}\label{eq:d2ifiber}
\partial_{2i}(F) \xrightarrow{\eta} \partial_{2i}(\Sigma^\infty F) \xrightarrow{\xi} {\Sigma_{2i}}_+ \underset{\Sigma_2 \wr \Sigma_i}{\wedge} \partial_{i} (F)^{\wedge 2} \xrightarrow{\partial_{2i}(\td{\phi}_2)} \Sigma \partial_{2i}(F).
\end{equation}

Our next task is to understand the
left coaction of $\mr{Comm}_*$ on $\partial_*(\Sigma^\infty F)$ in low degrees in terms of the attaching map $\phi$. 

\begin{lem}
Under the equivalence $\partial_i(F) \simeq \partial_i(\Sigma^\infty F)$, the map $\xi$ of (\ref{eq:d2ifiber}) agrees with the left $\mr{Comm}_*$-comodule structure map
$$ \partial_{2i}(\Sigma^\infty F) \rightarrow {\Sigma_{2i}}_+ \underset{\Sigma_2 \wr \Sigma_i}{\wedge} \mr{Comm}_2 \wedge \partial_{i} (\Sigma^\infty F)^{\wedge 2}. $$
\end{lem}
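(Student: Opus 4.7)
\emph{Plan.} My approach is to exploit the naturality of the Arone--Ching comodule coaction under the natural transformation $\Sigma^\infty F \to \Sigma^\infty P_{2i-1}(F)$ coming from the fiber sequence~(\ref{eq:phi}), thereby reducing the claim to the computation of this coaction on $\partial_*(\Sigma^\infty P_{2i-1}(F))$. First, I would observe that the map $\xi$ appearing in~(\ref{eq:d2ifiber}) is precisely the map on $\partial_{2i}$ induced by $\Sigma^\infty F \to \Sigma^\infty P_{2i-1}(F)$: under the equivalence $\partial_{2i}(\Sigma^\infty F) \simeq h\Tot^1 \partial_{2i}(\Sigma^\infty T^\bullet)$ from Lemma~\ref{lem:dSigmainfty}, we have $h\Tot^0 \partial_{2i}(\Sigma^\infty T^\bullet) = \partial_{2i}(\Sigma^\infty P_{2i-1}(F))$, and under the identification of the latter with $\Sigma_{2i+} \wedge_{\Sigma_2 \wr \Sigma_i} \partial_i(F)^{\wedge 2}$ from the preceding proof, the edge map $h\Tot^1 \to h\Tot^0$ becomes $\xi$.

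Next, naturality of the Arone--Ching coaction along $\Sigma^\infty F \to \Sigma^\infty P_{2i-1}(F)$ yields a commutative square
\begin{equation*}
\xymatrix{
\partial_{2i}(\Sigma^\infty F) \ar[r]^-{\alpha_F} \ar[d]_-{\xi} & \Sigma_{2i+} \wedge_{\Sigma_2 \wr \Sigma_i} \partial_i(F)^{\wedge 2} \ar@{=}[d] \\
\partial_{2i}(\Sigma^\infty P_{2i-1}(F)) \ar[r]^-{\alpha_{P_{2i-1}(F)}} & \Sigma_{2i+} \wedge_{\Sigma_2 \wr \Sigma_i} \partial_i(F)^{\wedge 2}
}
\end{equation*}
in which the right vertical is the identity, since $F \to P_{2i-1}(F)$ is already an equivalence on $i$th derivatives. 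This reduces the lemma to showing that $\alpha_{P_{2i-1}(F)}$ is the identity on this summand.

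Finally, since $P_{2i-1}(F) \simeq \Omega^\infty \PP_{2i-1}(F)$, the Snaith-type splitting of $\Sigma^\infty \Omega^\infty$ provides a canonical $\mr{Comm}_*$-comodule decomposition of $\partial_*(\Sigma^\infty P_{2i-1}(F))$. Because $\PP_{2i-1}(F)$ has nontrivial derivatives only in degrees $[i, 2i-1]$, only the length-$2$ Snaith summand contributes to $\partial_{2i}$, and the Arone--Ching coaction corresponds under the chain rule to the identity on this summand. The main obstacle is this last step: matching the intrinsic Arone--Ching comodule structure on $\partial_*(\Sigma^\infty \Omega^\infty H)$ with the explicit one coming from the Snaith splitting. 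This requires a careful comparison, most likely via the cosimplicial bar construction underlying the Arone--Ching definition of the comodule structure, and is the technical heart of the argument.
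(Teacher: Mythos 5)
Your proposal is correct and follows essentially the same route as the paper: both reduce via naturality of the Arone--Ching left comodule structure along the map $\Sigma^\infty F \to \Sigma^\infty P_{2i-1}(F)$ (which the paper also implicitly identifies with $\xi$), and then compute the coaction on $\partial_*(\Sigma^\infty P_{2i-1}(F)) = \partial_*(\Sigma^\infty\Omega^\infty\PP_{2i-1}(F))$ from the chain rule and the identification of the coaction on $\partial_*(\Sigma^\infty\Omega^\infty) \simeq \mr{Comm}_*$ with the coaction of $\mr{Comm}_*$ on itself. The only divergence is one of emphasis: what you flag as ``the technical heart'' (matching the intrinsic comodule structure of $\partial_*(\Sigma^\infty\Omega^\infty)$ with the explicit one from the Snaith splitting) the paper treats as standard and outsources to Arone--Ching.
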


\begin{proof}
 The left coaction of $\mr{Comm}_*$ on 
$$ \partial_*(\Sigma^\infty P_{2i-1}(F)) = \partial_*(\Sigma^\infty \Omega^\infty \PP_{2i-1}(F)) $$
is easily deduced from the chain rule \cite{AroneChing}, together with the fact that under the equivalence
$$ \partial_*(\Sigma^\infty \Omega^\infty) \simeq \mr{Comm}_*, $$
the left coaction of $\mr{Comm}_*$ on $\partial_*(\Sigma^\infty \Omega^\infty)$ is given by the left coaction of $\mr{Comm}_*$ on itself.  In particular, the coaction map corresponding to the partition $2i = i+i$ is given by the composite (of equivalences)
$$
\partial_{2i}(\Sigma^\infty P_{2i-1}(F)) \xrightarrow{\simeq} {\Sigma_{2i}}_+ \underset{\Sigma_2 \wr \Sigma_i}{\wedge} \partial_{i} (F)^{\wedge 2}  \xrightarrow{\simeq} {\Sigma_{2i}}_+ \underset{\Sigma_2 \wr \Sigma_i}{\wedge} \partial_{i} (\Sigma^\infty P_{2i-1}(F))^{\wedge 2}.
$$

The natural transformation of functors
$$ F \rightarrow P_{2i-1}(F) $$
induces a map of left $\mr{Comm}_*$-comodules
$$ \partial_*(\Sigma^\infty F) \rightarrow \partial_*(\Sigma^\infty P_{2i-1}(F)). $$
In particular, there is a commutative diagram
$$
\xymatrix{
\partial_{2i}(\Sigma^\infty F) \ar[r]^-\xi \ar[d] &
{\Sigma_{2i}}_+ \underset{\Sigma_2 \wr \Sigma_i}{\wedge} \partial_{i} (F)^{\wedge 2} \ar[d]^{=}
\\
{\Sigma_{2i}}_+ \underset{\Sigma_2 \wr \Sigma_i}{\wedge} \partial_{i} (\Sigma^\infty F)^{\wedge 2} \ar[r]_{\simeq} &
{\Sigma_{2i}}_+ \underset{\Sigma_2 \wr \Sigma_i}{\wedge} \partial_{i} (F)^{\wedge 2}
}
$$
where the vertical arrows are $\mr{Comm}_*$-comodule structure maps.  We conclude that the map $\xi$ in (\ref{eq:d2ifiber}) encodes the primary $\mr{Comm}_*$-comodule structure map, as desired.
\end{proof}

\begin{proof}[Proof of Theorem~\ref{thm:quad}]
By \cite{AroneChing}, we have
\begin{equation}\label{eq:AroneChing}
\partial_*(F) \simeq C(1_*, \mr{Comm}_*, \partial_*(\Sigma^\infty F)).
\end{equation}
In particular, we have
\begin{align*}
\partial_{2i}(F) 
& \simeq C(1_*, \mr{Comm}_*, \partial_*(\Sigma^\infty F))_{2i} \\
& \simeq \mr{fiber}
\left(
\partial_{2i}(\Sigma^\infty F) \xrightarrow{\xi}
{\Sigma_{2i}}_+ \underset{\Sigma_2 \wr \Sigma_i}{\wedge} \partial_{i} (F)^{\wedge 2}
\right).
\end{align*}
This equivalence was already recorded in the fiber sequence (\ref{eq:d2ifiber}), but now it implicitly records more structure, as (\ref{eq:AroneChing}) is an equivalence of left $\partial_*(\Id)$-modules.  Indeed, we now compute from (\ref{eq:AroneChing}) the $\partial_*(\Id)$-module structure of $\partial_*(F)$ in terms of the attaching map $\phi$.

To accomplish this, we work with dual derivatives, and then dualize.  We have
$$ \partial^*F = B(1_*, \mr{Comm}_*, \partial^*(\Sigma^\infty F)). $$
Using the Ching model for the bar construction \cite{Ching}, we have a pushout
$$
\xymatrix{
\partial I \wedge {\Sigma_{2i}}_+ 
\underset{\Sigma_2 \wr \Sigma_i}{\wedge} \partial^i(F) \ar@{^{(}->}[r] \ar[d]_{\xi^\vee} &
I \wedge {\Sigma_{2i}}_+ 
\underset{\Sigma_2 \wr \Sigma_i}{\wedge} \partial^i(F) \ar[d] 
\\
\partial I \wedge \partial^{2i}(\Sigma^\infty F) \ar[r] &
 B(1_*, \mr{Comm}_*, \partial^*(\Sigma^\infty F))_{2i}
}
$$
and the $\partial^*(\Id)$-comodule structure map is explicitly given by the map of pushouts
$$
\xymatrix@C-5em{
\partial I \wedge {\Sigma_{2i}}_+ 
\underset{\Sigma_2 \wr \Sigma_i}{\wedge} \partial^i(F) \ar@{^{(}->}[rr] \ar[dd]_{\xi^\vee} \ar[dr]^= &&
I \wedge {\Sigma_{2i}}_+ 
\underset{\Sigma_2 \wr \Sigma_i}{\wedge} \partial^i(F) \ar[dd]|-{\phantom{X}} \ar[dr]^= 
\\ 
& \partial I \wedge {\Sigma_{2i}}_+ 
\underset{\Sigma_2 \wr \Sigma_i}{\wedge} \partial^i(F) \ar@{^{(}->}[rr] \ar[dd] && 
I \wedge {\Sigma_{2i}}_+ 
\underset{\Sigma_2 \wr \Sigma_i}{\wedge} \partial^i(F) \ar[dd]
\\
\partial I \wedge \partial^{2i}(\Sigma^\infty F) \ar[rr]|{\quad} \ar[dr] &&
\partial^{2i}(F) \ar[dr] 
\\
& \ast \ar[rr] && {\Sigma_{2i}}_+ \underset{\Sigma_2 \wr \Sigma_i}{\wedge} \partial^{2}(\Id) \wedge \partial^i(F)^{\wedge 2}
}
$$
In particular, we deduce that the coaction map
$$ \mu^\vee: \partial^{2i}(F) \rightarrow {\Sigma_{2i}}_+ \underset{\Sigma_2 \wr \Sigma_i}{\wedge} \partial^{2}(\Id) \wedge \partial^i(F)^{\wedge 2}  $$
is precisely the connecting morphism $(\Sigma^{-1} \partial_{2i}(\td{\phi}_2))^\vee$ in the cofiber sequence dual to the fiber sequence (\ref{eq:d2ifiber}):
$$
{\Sigma_{2i}}_+ \underset{\Sigma_2 \wr \Sigma_i}{\wedge} \partial^i(F)^{\wedge 2} \xrightarrow{\xi^\vee} \partial^{2i}(\Sigma^\infty F) \xrightarrow{\eta^\vee} \partial^{2i}(F) \xrightarrow{(\Sigma^{-1} \partial_{2i}(\td{\phi}_2))^\vee }
\Sigma \left( {\Sigma_{2i}}_+ \underset{\Sigma_2 \wr \Sigma_i}{\wedge} \partial^i(F)^{\wedge 2} \right).
$$
Dualizing, we deduce that 
$$ \mu = \Sigma^{-1}\partial_{2i}(\td{\phi}_2) $$
and the theorem follows.
\end{proof}

\section{Homology of the layers}\label{sec:Hlayers}

In this section we briefly recall some facts about the homology of the layers of the Goodwillie tower of the identity evaluated on spheres.  This computation is due to Arone and Mahowald \cite{AroneMahowald}, but we will need to take advantage of the interpretation presented in \cite{Behrens}.

Let $F: \Top_* \rightarrow \Top_*$ be a reduced finitary homotopy functor.
In \cite{Behrens}, the author introduced operations
$$ \bar{Q}^j: H_*(\DD_i(F)(X)) \rightarrow H_{*+j-1}(\DD_{2i}(F)(X)). $$
These operations were defined as follows: the left action of $\partial_*(\Id)$ on $\partial_*(F)$ yields a map
$$ \mu : \Sigma^{-1} \partial_i(F)^{\wedge 2} \simeq \partial_2(\Id) \wedge \partial_i(F)^{\wedge 2} \rightarrow \partial_{2i}(F). $$
This map induces a map
$$ \mu': \Sigma^{-1} \DD_{i}(F)(X)^{\wedge 2}_{h\Sigma_2} \rightarrow \DD_{2i}(F)(X). $$
The operations are given by 
\begin{equation}\label{eq:Qbar}
\bar{Q}^j(x) :=  \mu'_* \sigma^{-1} Q^j x
\end{equation}
for $x \in H_*(\DD_i(F)(X))$.

In \cite{Behrens}, the Arone-Mahowald computation is interpreted in terms of these operations, and it is shown that
$$ H_*(\DD_{2^k}(S^n)) = \FF_2 \left\{ \bar{Q}^{i_1} \cdots \bar{Q}^{i_k} \iota_n \: : \: 
i_s \ge 2i_{s+1}+1, \: i_k \ge n   
\right\}. $$
Recall that $H_* (S^1)^{\wedge 2^k}_{h\Sigma^{\wr k}_{2}}$ contains a direct summand
$$ \td{\mc{R}}_1(k) = \FF_2\{ Q^{i_1} \wr \cdots \wr Q^{i_k} \iota_1 \: : \: i_s \ge i_{s+1}+ \cdots + i_k+1\}. $$
In \cite{Kuhn}, certain idempotents $e_{k}$ are constructed to act on $\td{\mc{R}}_1(k)$ (in \cite{Kuhn}, these idempotents are denoted $D_{k-1}$, but we use the notation $e_k$ in this paper so as to not create confusion with the notation used for the layers of the Goodwillie tower).
These idempotents split off the summand $H_*(\Sigma L(k))$.  Kuhn shows that
$$ H_*(\Sigma L(k)) = \FF_2 \left\{ e_k(Q^{i_1} \wr \cdots \wr {Q}^{i_k} \iota_1) \: : \: 
i_s \ge 2i_{s+1}+1, \: i_k \ge 1    
\right\}. $$

\begin{lem}\label{lem:basis}
Under the equivalence $\Sigma L(k) \simeq \Sigma^k \mb{D}_{2^k}(S^1)$ of (\ref{eq:AroneDwyer}), we have a bijection between the two bases
$$ e_k(Q^{i_1} \wr \cdots \wr {Q}^{i_k} \iota_1) \leftrightarrow \sigma^k\bar{Q}^{i_1} \cdots \bar{Q}^{i_k} \iota_1. $$
\end{lem}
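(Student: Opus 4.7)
The plan is to proceed by induction on $k$. The base case $k = 0$ is immediate: under the identification $\Sigma L(0) \simeq S^1 \simeq \DD_1(S^1)$, both $e_0(\iota_1)$ and $\iota_1$ name the fundamental class in $H_1(S^1)$.

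For the inductive step, I would observe that both constructions propagate from level $k-1$ to level $k$ by essentially the same Dyer-Lashof step, differing only in how one returns to the smaller summand. On the Goodwillie side, the element $\sigma^k\bar{Q}^{i_1}\bar{Q}^{i_2}\cdots\bar{Q}^{i_k}\iota_1$ is obtained from $\sigma^{k-1}\bar{Q}^{i_2}\cdots\bar{Q}^{i_k}\iota_1 \in H_*(\Sigma^{k-1}\DD_{2^{k-1}}(S^1))$ by desuspending once, entering the quadratic extended power via $Q^{i_1}$, applying $\sigma^{-1}$, applying the map $\mu'_*$ induced by the operad multiplication, and resuspending $k$ times (by the formula (\ref{eq:Qbar}) defining $\bar{Q}^{i_1}$). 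On the Kuhn side, $e_k(Q^{i_1}\wr Q^{i_2}\wr\cdots\wr Q^{i_k}\iota_1)$ is obtained from $e_{k-1}(Q^{i_2}\wr\cdots\wr Q^{i_k}\iota_1) \in H_*(\Sigma L(k-1))$ by using $\iota_{k-1}$ to lift into $H_*\Sigma^\infty(S^1)^{\wedge 2^{k-1}}_{h\Sigma_2^{\wr(k-1)}}$, applying the Dyer-Lashof operation $Q^{i_1}\wr(-)$ into $H_*\Sigma^\infty(S^1)^{\wedge 2^k}_{h\Sigma_2^{\wr k}}$, and projecting via $e_k$ (equivalently, $p_k$ followed by $\iota_k$ followed by $p_k$, as this idempotent splits off $\Sigma L(k)$).

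The bridge between these two recursions is Theorem~\ref{thm:quad}, which identifies $\mu'$ (up to a shift) with the attaching map $\td{\phi}_2$ between the $2^{k-1}$th and $2^k$th layers of the Goodwillie tower of the identity evaluated at $S^1$. Under the Arone-Dwyer equivalence $\Sigma^k\DD_{2^k}(S^1) \simeq \Sigma L(k)$, this attaching map is realized by Kuhn's splitting: it factors, up to suitable suspensions, as a map $\Sigma L(k-1)^{\wedge 2}_{h\Sigma_2}\to \Sigma^\infty (S^1)^{\wedge 2^k}_{h\Sigma_2^{\wr k}}$ induced by $\iota_{k-1}$ followed by $p_k$. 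Because $Q^{i_1}$ is natural for stable maps and the idempotent $e_k$ acts as the identity on its image, combining this compatibility with the inductive hypothesis carries $\sigma^k\bar{Q}^{i_1}\cdots\bar{Q}^{i_k}\iota_1$ to $e_k(Q^{i_1}\wr\cdots\wr Q^{i_k}\iota_1)$, as required.

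The main obstacle is justifying the factorization of $\td{\phi}_2$ through Kuhn's splitting maps under the Arone-Dwyer equivalence. This compatibility is not exhibited directly in the statement of (\ref{eq:AroneDwyer}), and tracing it through requires invoking the partition-complex model for $\partial_{2^k}(\Id)$ used in the construction of \cite{AroneDwyer} together with the Kuhn-Mitchell-Priddy splitting \cite{KMP} of the iterated extended power, so that the operadic input to $\bar{Q}^{i_1}$ is recognized as the same map, on the nose, that carves $\Sigma L(k)$ out of the wreath-product extended power.
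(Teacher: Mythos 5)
Your plan takes a genuinely different route from the paper's, but the gap you flag at the end is not a loose thread to be tied up later --- it is the whole argument, and chasing it down as you propose would be circular.

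The paper proves the lemma in one short algebraic stroke. Both $H_*\Sigma L(k)$ and $H_*\Sigma^k\DD_{2^k}(S^1)$ are exhibited as the same subquotient of the free wreath-product module $\td{\mc{R}}_1(k)$: the former as the image of Kuhn's idempotent $e_k$, the latter as the quotient $\bar{\mc{R}}_1(k)$ by the mixed Adem relation $(1)$, via the canonical surjection $\nu_k$ of (\ref{eq:nuk}). Since each operator $T_s$ of which $e_k$ is an iterate is built from precisely that mixed Adem relation, one has $\nu_k T_s = \nu_k$, hence $\nu_k e_k = \nu_k$, and the bijection of bases drops out immediately. No induction on $k$, no attaching maps, and no unpacking of the Arone--Dwyer equivalence beyond what the identification $H_*\Sigma^k\DD_{2^k}(S^1) = \bar{\mc{R}}_1(k)\{\iota_1\}$ from \cite{Behrens} already packages.

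Your induction, by contrast, rests on the geometric claim that under the Arone--Dwyer equivalence the operadic structure map $\mu'$ (identified with the attaching map $\td{\phi}_2$ by Theorem~\ref{thm:quad}) is realized on homology by Kuhn's composite of $\iota_{k-1}$ followed by $p_k$. That is not a deferrable technicality; it is essentially the content of Lemma~\ref{lem:lemma2}, and the paper's proof of Lemma~\ref{lem:lemma2} \emph{uses} Lemma~\ref{lem:basis}, via (\ref{eq:Hi_k}), (\ref{eq:Hp_k}), and the relation $\nu_k T_s = \nu_k$. So invoking that compatibility here is circular. The noncircular version of your plan --- reconstructing the compatibility directly from the partition-complex model for $\partial_{2^k}(\Id)$ and the KMP splitting --- would amount to reproving a substantial piece of Arone--Dwyer with extra control over the splitting, which is far more work than the lemma warrants and is exactly what the one-line observation $\nu_k e_k = \nu_k$ is designed to circumvent. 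A secondary issue: your recursion on the Kuhn side tacitly assumes that $e_k$ peels off the outermost $Q^{i_1}\wr(-)$ and reduces to $e_{k-1}$ on the inner factor, but $e_k$ is an iterate of $T_s$ for all $1 \le s \le k-1$, including $s=1$ which acts on the first two positions; that reduction would require its own argument.
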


\begin{proof}
In Section~1.4 of \cite{Behrens} an algebra $\bar{\mc{R}}_n$ of operations $\bar{Q}^j$ is defined, with relations
\begin{align*}
(1) \quad & \bar{Q}^r \bar{Q}^s = \sum_t \left[ \binom{s-r+t}{s-t} + \binom{s-r+t}{2t-r} \right] \bar{Q}^{r+s-t} \bar{Q}^{t}, \\
(2) \quad & \bar{Q}^{j_1} \cdots \bar{Q}^{j_k} = 0, \: \text{if $j_1 < j_2+\cdots +j_k +n$}.
\end{align*}
Here, and throughout this section, mod $2$ binomial coefficients $\binom{a}{b} \in \FF_2$ are defined for all $a, b \in \ZZ$ by
$$ \binom{a}{b} = \text{coefficient of $t^b$ in $(1+t)^a$.} $$
Let $\bar{\mc{R}}_n(k)$ be the summand additively generated by length $k$ sequences of operations.  
It is shown in \cite{Behrens} that $H_* \Sigma^k \DD_{2^k}(S^1)$ is precisely the quotient of $\td{\mc{R}}_1(k)$ by relation (1) above, and therefore
$$ H_* \DD_{2^k}(S^1) = \bar{\mc{R}}_1(k)\{ \iota_1 \}. $$  

Kuhn's idempotents $e_k$ are defined in \cite{Kuhn} as certain iterates of idempotents
$$ T_s: \td{\mc{R}}_1 \rightarrow \td{\mc{R}}_1, \quad 1 \le s \le k-1 $$
where 
\begin{multline*}
T_s(Q^{i_1} \wr \cdots \wr Q^{i_k}) = 
\\
\sum_t \left[ \binom{i_{s+1}-i_s+t}{i_{s+1}-t} + \binom{i_{s+1}-i_s+t}{2t-i_s} \right] {Q}^{i_1} \wr \cdots \wr \bar{Q}^{i_s+i_{s+1}-t} \wr {Q}^{t} \wr \cdots \wr Q^{i_k}.
\end{multline*}
Let
\begin{equation}\label{eq:nuk}
 \nu_k: \td{\mc{R}}_1(k) \rightarrow \bar{\mc{R}}_1(k)
\end{equation}
be the canonical surjection.  Clearly
$\nu_k T_s = \nu_k$, and therefore $\nu_k e_{k} = \nu_k$.  In particular
$$ \nu_k  e_{k} Q^{i_1} \wr \cdots \wr Q^{i_k} = \bar{Q}^{i_1} \cdots \bar{Q}^{i_k}. $$
\end{proof}

We remark that the above lemma, and the homomorphisms used in its proof, allow us to easily describe the effect on homology
\begin{gather*}
(\iota_k)_*: H_* \Sigma^k \DD_{2^k} (S^1) \rightarrow H_*(S^1)^{\wedge 2^k}_{h\Sigma_2^{\wr k}}, \\
(p_k)_*:  H_*(S^1)^{\wedge 2^k}_{h\Sigma_2^{\wr k}} \rightarrow H_* \Sigma^k \DD_{2^k} (S^1)
\end{gather*}
 of the splitting maps (\ref{eq:summand}) in terms of the $\bar{Q}$-basis.  Namely, we have
\begin{align}
(\iota_k)_* \sigma^k\bar{Q}^{i_1} \cdots \bar{Q}^{i_k} \iota_1 & = e_k(Q^{i_1} \wr \cdots \wr Q^{i_k} \iota_1) \label{eq:Hi_k} \\
(p_k)_*Q^{i_1} \wr \cdots \wr Q^{i_k} \iota_1 & = \sigma^k \bar{Q}^{i_1} \cdots \bar{Q}^{i_k} \iota_1. \label{eq:Hp_k}
\end{align}

Since (\ref{eq:AroneDwyer}) and (\ref{eq:summand}) give the spectrum
$\Sigma^k \DD_{2^k}(S^1)$ 
as a summand of the suspension spectrum $\Sigma^\infty (S^1)^{\wedge 2^k}_{h\Sigma_{2}^{\wr k}}$, we can explicitly describe the homology of its zeroth space \cite{CohenLadaMay}:
$$ H_*(B^k D_{2^k}(S^1)) = \mc{F} \left (\FF_2\left\{ \sigma^k \bar{Q}^{i_1} \cdots \bar{Q}^{i_k} \iota_1 \: : \: 
i_s \ge 2i_{s+1}+1, \: i_k \ge 1   
\right\} \right). $$
Here, $\mc{F}$ is the functor
$$ \mc{F}: \text{graded $\FF_2$-vector spaces} \rightarrow \text{allowable $\mc{R}$-algebras}  $$
which associates to a graded $\FF_2$-vector space $V$ the free allowable algebra over the Dyer-Lashof algebra.

We endow $H_*(B^kD_{2^k}(S^1))$ with a (decreasing) weight filtration by declaring that
\begin{align*}
w(x) & = 2^k \quad \text{for $x \in H_*\Sigma^k \DD_{2^k}(S^1)$}, \\
w(Q^i x) & = 2 \cdot w(x), \\
w(x \ast y) & = w(x)+w(y).
\end{align*}  

This weight filtration is related to Goodwillie calculus in the following manner.  As indicated in the proof of Lemma~\ref{lem:tdphi2}, the functor $\Sigma^\infty \Omega^\infty$
has derivatives 
$$ \partial_i(\Sigma^\infty \Omega^\infty) =  S $$
with trivial $\Sigma_i$-action. For connected spectra $E$, the Goodwillie tower $P_i(\Sigma^\infty \Omega^\infty)(E)$ converges, giving a spectral sequence
\begin{equation}\label{eq:SS}
 E_1^{i,*} = H_*(E^{\wedge i}_{h\Sigma_i}) \Rightarrow H_*(\Omega^\infty E).
\end{equation}
In \cite[Ex.~6.1]{KuhnChrom}, it is explained that the Goodwillie tower for $\Sigma^\infty \Omega^\infty$ splits when evaluated on connected suspension spectra.  In these cases the spectral sequence (\ref{eq:SS}) degenerates.  By naturality, this also holds for summands of connected suspension spectra.  The weight filtration is simply an appropriate scaling of the filtration in this spectral sequence.

The induced morphisms
$$ 
H_* B^k D_{2^k}(S^1) 
\begin{array}{c}
(d_k)_* \\
\leftrightarrows \\
(\delta_k)_*
\end{array}
H_* B^{k+1} D_{2^{k+1}}(S^1) 
$$
were computed in \cite{Kuhn}: we end this section be recalling these explicit descriptions.

Suppose that 
$$ Q^{j_1} \cdots Q^{j_\ell}\sigma^{k+1}\bar{Q}^{i_1} \cdots \bar{Q}^{i_{k+1}}\iota_1 $$ 
is an algebra generator of $H_* B^{k+1} D_{2^{k+1}}(S^1)$.
Writing 
$$  e_{k+1} Q^{i_1} \wr \cdots \wr Q^{i_{k+1}} = \sum Q^{i'_1} \wr \cdots \wr Q^{i'_{k+1}}, $$
we have
$$ (d_k)_* Q^{j_1} \cdots Q^{j_\ell}\sigma^{k+1}\bar{Q}^{i_1} \cdots \bar{Q}^{i_{k+1}}\iota_1 = 
\sum Q^{j_1} \cdots Q^{j_\ell} Q^{i'_1} \sigma^k \bar{Q}^{i'_2}  \cdots  \bar{Q}^{i'_{k+1}}. $$
Furthermore, as $d_k$ is an infinite loop map, $(d_k)_*$ is a map of algebras.  We see that $(d_k)_*$ preserves the weight filtration.  In fact, $(d_k)_*$ is isomorphic to its own associated graded (with respect to the monomial basis).

Suppose that 
$$ Q^{j_1} \cdots Q^{j_\ell}\sigma^{k}\bar{Q}^{i_1} \cdots \bar{Q}^{i_{k}}\iota_1 $$ 
is an algebra generator of $H_* B^{k} D_{2^{k}}(S^1)$.  Then we have 
$$ (\delta_k)_* Q^{j_1} \cdots Q^{j_\ell}\sigma^{k}\bar{Q}^{i_1} \cdots \bar{Q}^{i_{k}}\iota_1 
= \sum_s Q^{j_1} \cdots \bar{Q}^{j_s} \cdots Q^{j_\ell}\sigma^{k}\bar{Q}^{i_1} \cdots \bar{Q}^{i_{k}}\iota_1.
$$
Here, we move the $\bar{Q}^i$ past the $Q^j$'s using the mixed Adem relation
$$ 
\bar{Q}^r {Q}^s = \sum_t \left[ \binom{s-r+t}{s-t} + \binom{s-r+t}{2t-r} \right] {Q}^{r+s-t} \bar{Q}^{t}.
$$
In particular, $(\delta_k)_*$ preserves the weight filtration on algebra generators.
While the map $(\delta_k)_*$ is \emph{not} a map of algebras, Kuhn shows that its associated graded with respect to the weight filtration \emph{is} a map of algebras \cite[Prop.~2.7]{Kuhn}.

\section{Homological behavior of $\psi_k$}\label{sec:Hpsi}

In this section we will prove Theorem~\ref{thm:psidelta}, and then explain how it implies Theorem~\ref{thm:main}.

The map $\delta_k$ is given by the composite
\begin{equation}\label{eq:deltafactor}
\Omega^\infty \Sigma^k \DD_{2^k}(S^1)  \xrightarrow{JH} \Omega^\infty \left( \Sigma^k \DD_{2^k}(S^1) \right)^{\wedge 2}_{h\Sigma_2} \xrightarrow{\Omega^\infty \alpha_k} \Omega^{\infty}\Sigma^{k+1} \DD_{2^{k+1}}(S^1)
\end{equation}
where $\alpha_k$ is the composite (see (\ref{eq:AroneDwyer}) and (\ref{eq:summand}))
$$  
\left( \Sigma^k \DD_{2^k}(S^1) \right)^{\wedge 2}_{h\Sigma_2} \xrightarrow{(\iota_k)^{\wedge 2}_{h\Sigma_2}} \Sigma^\infty (S^1)^{\wedge 2^{k+1}}_{h\Sigma_2^{\wr k+1}} \xrightarrow{p_{k+1}} \Sigma^{k+1} \DD_{2^{k+1}}(S^1) $$
and the James-Hopf map $JH$ is defined by the splitting of $\Sigma^\infty \Omega^\infty \Sigma^k \DD_{2^k}(S^1)$ induced by the retract of the Goodwillie towers
$$
\xymatrix@C-2em{ P_{i}(\Sigma^\infty \Omega^\infty)(\Sigma^k \DD_{2^k}(S^1)) \ar[dr]_{(\iota_k)_*} \ar[rr]^{\mital{id}}&&
P_{i}(\Sigma^\infty \Omega^\infty)(\Sigma^k \DD_{2^k}(S^1)) \\
& P_i(\Sigma^\infty \Omega^\infty)( \Sigma^\infty (S^1)^{\wedge 2^k}_{h\Sigma_2^{\wr k}}) \ar[ur]_{(p_k)_*} }
$$

Consider the natural transformation of functors from vector spaces to spectra given by the the adjoint of $\psi_k$:
$$ \td{\psi}_k: \Sigma^\infty \Omega^{\infty} \Sigma^k \DD_{2^k}(S^V) \rightarrow \Sigma^{k+1} \DD_{2^{k+1}}(S^V). $$
On the level of the $2^{k+1}$st layers of the corresponding Weiss towers, $\td{\psi}_k$ induces a map
$$ [\td{\psi}_k]_2 : (\Sigma^{k} \DD_{2^k}(S^V))^{\wedge 2}_{h\Sigma_2} \simeq \DD^W_{2^{k+1}}(\Sigma^\infty \Omega^{\infty} \Sigma^k \DD_{2^k} \circ \chi)(V) \rightarrow \Sigma^{k+1} \DD_{2^{k+1}}(S^V). $$
The proof of Theorem~\ref{thm:psidelta} will rest on the following two lemmas.

\begin{lem}\label{lem:lemma1}
The natural transformation $\psi_k$, when evaluated on $S^1$, admits a factorization
$$ \Omega^\infty \Sigma^k \DD_{2^k}(S^1)  \xrightarrow{JH} \Omega^\infty \left( \Sigma^k \DD_{2^k}(S^1) \right)^{\wedge 2}_{h\Sigma_2} \xrightarrow{\Omega^\infty [\td{\psi}_k]_2} \Omega^{\infty}\Sigma^{k+1} \DD_{2^{k+1}}(S^1).
$$
\end{lem}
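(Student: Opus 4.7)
The plan is to pass to the adjoint $\td\psi_k$ and prove the spectrum-level identity
\[
\td\psi_k(\mb{R}) \simeq [\td\psi_k]_2 \circ JH
\]
as maps $\Sigma^\infty\Omega^\infty\Sigma^k\DD_{2^k}(S^1) \to \Sigma^{k+1}\DD_{2^{k+1}}(S^1)$, from which the lemma follows by applying $\Omega^\infty$.  Write $F(V) := \Sigma^\infty\Omega^\infty\Sigma^k\DD_{2^k}(S^V)$ and $G(V) := \Sigma^{k+1}\DD_{2^{k+1}}(S^V)$, so that $\td\psi_k: F \to G$ is a natural transformation of spectrum-valued functors of $V$, and $G$ is Weiss-homogeneous of degree $2^{k+1}$.

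First I would identify the Weiss tower of $F$.  Since $\Sigma^k\DD_{2^k}\circ\chi$ is Weiss-homogeneous of degree $2^k$ and $\Sigma^\infty\Omega^\infty$ has Goodwillie derivatives $S$ with trivial action, the chain rule (exactly as in the proof of Lemma~\ref{lem:tdphi2}) gives $D^W_{j\cdot 2^k} F(V) \simeq (\Sigma^k\DD_{2^k}(S^V))^{\wedge j}_{h\Sigma_j}$, with all other Weiss layers trivial.  Since $G$ is Weiss-polynomial of degree $2^{k+1}$, $\td\psi_k$ factors uniquely as $F \to P^W_{2^{k+1}} F \xrightarrow{\alpha} G$, and the restriction $\alpha|_{D^W_{2^{k+1}} F}$ is $[\td\psi_k]_2$ by definition.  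Moreover, a short long-exact-sequence argument applied to the fiber sequence $D^W_{2^{k+1}} F \to P^W_{2^{k+1}} F \to P^W_{2^k} F$ shows that $\alpha$ is uniquely determined by $[\td\psi_k]_2$: the flanking groups $\pi_0 \Map(P^W_{2^k} F, G)$ and $\pi_0 \Map(P^W_{2^k} F, \Omega G)$ both vanish because natural transformations from a Weiss-polynomial functor of degree $\leq 2^k$ to a Weiss-homogeneous functor of degree $2^{k+1}$ factor through $P^W_{2^k}$ of the target, which is zero.

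Next I would evaluate at $V = \mb{R}$.  By (\ref{eq:summand}), $\Sigma^k\DD_{2^k}(S^1)$ is a summand of the connected suspension spectrum $\Sigma^\infty(S^1)^{\wedge 2^k}_{h\Sigma_2^{\wr k}}$, so the Goodwillie tower of $\Sigma^\infty\Omega^\infty$ evaluated at $\Sigma^k\DD_{2^k}(S^1)$ splits as a wedge of Snaith summands $(\Sigma^k\DD_{2^k}(S^1))^{\wedge j}_{h\Sigma_j}$; by the definition of $JH$ recalled in Section~\ref{sec:Hlayers}, $JH$ is the projection onto the $j=2$ summand.  Under the chain-rule identification, this Snaith splitting induces a compatible splitting of the Weiss tower of $F$ at $V = \mb{R}$: $P^W_{2^{k+1}} F(\mb{R}) \simeq \Sigma^k\DD_{2^k}(S^1) \vee (\Sigma^k\DD_{2^k}(S^1))^{\wedge 2}_{h\Sigma_2}$, and the canonical map $F(\mb{R}) \to P^W_{2^{k+1}} F(\mb{R})$ is projection onto the first two Snaith summands.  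Hence $\td\psi_k(\mb{R})$ decomposes under this splitting into a $j=2$ piece, which is $[\td\psi_k]_2 \circ JH$ by construction, and a $j=1$ piece, namely the composite $\Sigma^k\DD_{2^k}(S^1) \xrightarrow{s} P^W_{2^{k+1}} F(\mb{R}) \xrightarrow{\alpha(\mb{R})} G(\mb{R})$ where $s$ is the Snaith section.

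To dispatch the $j=1$ piece I would argue that Kuhn's retract $X_k \hookrightarrow \Sigma^\infty(S^1)^{\wedge 2^k}_{h\Sigma_2^{\wr k}}$ admits a $V$-natural refinement, so the Snaith section upgrades to a natural transformation $\Sigma^k\DD_{2^k}\circ\chi \to F$ of functors of $V$; composing with $\alpha$ then produces a natural transformation between Weiss-homogeneous functors of degrees $2^k$ and $2^{k+1}$, which is null by the universal property of $P^W_{2^k}$ (since $P^W_{2^k} G = 0$).  Combined with the uniqueness of $\alpha$ from the second paragraph, this forces $\alpha(\mb{R}) \circ s = 0$, so $\td\psi_k(\mb{R}) = [\td\psi_k]_2 \circ JH$, and taking adjoints yields the lemma.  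The main obstacle is exactly this naturality-in-$V$ upgrade of the Snaith section: Kuhn's splitting is constructed only at $V = \mb{R}$, and ensuring that the chain-rule identification of the Weiss tower of $F$ at $V = \mb{R}$ with the Goodwillie-Snaith splitting is sufficiently compatible with the Weiss filtration structure to permit the degree argument requires some bookkeeping with the Arone-Ching chain rule.
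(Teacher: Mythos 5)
Your proposal shares the paper's setup exactly through the first two steps: factor the adjoint $\td{\psi}_k$ through $P^W_{2^{k+1}}(\Sigma^\infty\Omega^\infty\Sigma^k\DD_{2^k}\circ\chi)$, specialize to $V=\RR$, use the splitting $P^W_{2^{k+1}}(\cdots)(\RR) \simeq \Sigma^k\DD_{2^k}(S^1) \vee (\Sigma^k\DD_{2^k}(S^1))^{\wedge 2}_{h\Sigma_2}$, and reduce to showing that the resulting $j=1$ component $[\td{\psi}_k]_1 : \Sigma^k\DD_{2^k}(S^1) \to \Sigma^{k+1}\DD_{2^{k+1}}(S^1)$ is null. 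The divergence is entirely in how that component is killed. The paper simply observes that under the Arone--Dwyer equivalence this is a map $\Sigma L(k) \to \Sigma L(k+1)$, and $[L(k), L(k+1)] = 0$ by Nishida \cite[Cor.~5.4]{Nishida}; one line, done. You instead try to run a structural Weiss-calculus argument: upgrade the $j=1$ section to a natural-in-$V$ transformation $\Sigma^k\DD_{2^k}\circ\chi \to F$, compose with $\alpha$, and observe that a natural transformation from a $2^k$-homogeneous to a $2^{k+1}$-homogeneous functor is null.

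Two comments on your route. First, the ``obstacle'' you flag at the end is not actually an obstacle, but for the wrong reason than you seem to be grasping for: the $j=1$ Snaith section is just the unit $\eta: E \to \Sigma^\infty\Omega^\infty E$ of the $(\Sigma^\infty,\Omega^\infty)$-adjunction, applied to $E = \Sigma^k\DD_{2^k}(S^V)$ and composed with the canonical map to $P^W_{2^{k+1}}F$; this is manifestly natural in $V$ and has nothing to do with Kuhn's retraction $(\iota_k, p_k)$, which, as you correctly note, is a fixed-spectrum construction with no $V$-naturality. So your structural argument can indeed be completed once you replace ``Kuhn's retract'' with ``the unit.'' Second, your ``uniqueness of $\alpha$'' paragraph is a red herring: it is neither used in the end (once the composite with the natural section is shown null, you already know $\alpha(\RR)\circ s \simeq *$, no uniqueness needed), nor is its justification correct as stated --- a natural transformation out of a polynomial functor does \emph{not} factor through $P^W_n$ of the target (the universal property of $P^W_n$ runs in the opposite direction); the vanishing you want there is again just the orthogonality of homogeneous layers of different degrees. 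In sum, your approach is morally equivalent to the paper's and can be made to work, but it replaces a one-line computation with a more elaborate naturality argument and carries some confusion about which constructions are $V$-natural; the paper's Nishida shortcut is both shorter and avoids those subtleties entirely.
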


\begin{proof}[Proof of Lemma~\ref{lem:lemma1}]
Since the functor $\Sigma^{k+1}\DD_{2^{k+1}}(S^V)$ is of degree $2^{k+1}$ in $V$, the adjoint $\td{\psi}_k$ factors as
$$
\Sigma^\infty \Omega^\infty \Sigma^{k}\DD_{2^k}(S^V) \rightarrow P^W_{2^{k+1}}(\Sigma^\infty \Omega^\infty \Sigma^k \DD_{2^k} \circ \chi)(V) \xrightarrow{\tau_k} \Sigma^{k+1} \DD_{2^{k+1}}(S^V)
$$
Specializing to the case of $V = \RR$, and using the splitting
\begin{align*}
P^W_{2^{k+1}}(\Sigma^\infty \Omega^\infty \Sigma^k \DD_{2^k} \circ \chi)(\RR)
& \simeq P_2(\Sigma^\infty \Omega^\infty)(\Sigma^k \DD_{2^k}(S^1)) \\
& \simeq \Sigma^k \DD_{2^k}(S^1) \vee \left( \Sigma^k \DD_{2^k}(S^1) \right)^{\wedge 2}_{h\Sigma_2} 
\end{align*}
we see that in this case $\tau_k$ may be decomposed as
$$ \tau_k = [\td{\psi}_k]_1 \vee [\td{\psi}_k]_2. $$
Using \cite[Cor.~5.4]{Nishida}, we see that 
$$ [\Sigma^k \DD_{2^k}(S^1), \Sigma^{k+1} \DD_{2^{k+1}}(S^1)] \cong [L(k), L(k+1)] = 0. $$
Therefore $[\td{\psi}_k]_1 \simeq \ast$, and the lemma follows.
\end{proof}

\begin{lem}\label{lem:lemma2}
The induced maps 
$$ (\alpha_k)_*, ([\td{\psi}_k]_2)_*: H_* \left( \Sigma^k \DD_{2^k}(S^1) \right)^{\wedge 2}_{h\Sigma_2} \rightarrow H_* \Sigma^{k+1} \DD_{2^{k+1}}(S^1). $$
are equal.
\end{lem}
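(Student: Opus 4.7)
The plan is to compute both $(\alpha_k)_*$ and $([\td\psi_k]_2)_*$ on a basis of $H_*(\Sigma^k\DD_{2^k}(S^1))^{\wedge 2}_{h\Sigma_2}$ and verify they agree. First I would apply Theorem~\ref{thm:quad} to the portion of the Goodwillie tower of the identity whose nontrivial layers lie in the range $\{2^k, 2^{k+1}\}$; since these are consecutive nontrivial layers, this range fits the generalized quadratic hypotheses of Section~\ref{sec:quad} with $i = 2^k$. The theorem identifies $[\td\psi_k]_2$, up to the evident suspension shifts, with the operadic action map $\mu': \Sigma^{-1}\DD_{2^k}(S^1)^{\wedge 2}_{h\Sigma_2} \to \DD_{2^{k+1}}(S^1)$. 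Since $\bar{Q}^j$ is defined in (\ref{eq:Qbar}) as $(\mu')_* \circ \sigma^{-1} \circ Q^j$, this gives the explicit formula
$$([\td\psi_k]_2)_*(Q^j \sigma^k \bar{Q}^I \iota_1) = \sigma^{k+1} \bar{Q}^j \bar{Q}^I \iota_1$$
on Dyer-Lashof basis classes of the source.

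Second, I would compute $(\alpha_k)_*$ on the same classes using (\ref{eq:Hi_k}) and (\ref{eq:Hp_k}). The map $(\iota_k)^{\wedge 2}_{h\Sigma_2}$ sends $Q^j \sigma^k \bar{Q}^I \iota_1$ to $Q^j e_k(Q^I \wr \iota_1)$, which expands as $\sum_L c_L \, Q^j \wr Q^L \iota_1$ for coefficients $c_L$ satisfying $\sum_L c_L \bar{Q}^L \iota_1 = \bar{Q}^I \iota_1$ (by the relation $\nu_k e_k = \nu_k$ from the proof of Lemma~\ref{lem:basis}). Then $(p_{k+1})_*$ sends this sum, via (\ref{eq:Hp_k}), to $\sum_L c_L \sigma^{k+1} \bar{Q}^j \bar{Q}^L \iota_1 = \sigma^{k+1} \bar{Q}^j \bar{Q}^I \iota_1$, matching the formula from the first step. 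The remaining ``mixed product'' basis classes of the form $\sigma^k\bar{Q}^I\iota_1 \otimes \sigma^k \bar{Q}^J\iota_1 + (\text{swap})$ with $I \ne J$ are handled in the same spirit: on the $[\td\psi_k]_2$ side, Theorem~\ref{thm:quad} routes the map through $\mu'$ applied to such a class, while on the $\alpha_k$ side, the $e_{k+1}$-summand of $(S^1)^{\wedge 2^{k+1}}_{h\Sigma_2^{\wr k+1}}$ detects the image via the combinatorial expansion of $e_{k+1}$; the two collapse to the same expression by the analogous identity $\nu_{k+1}e_{k+1} = \nu_{k+1}$.

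The main obstacle is the bookkeeping in the middle step: one must carefully relate Kuhn's idempotent $e_k$ (defined as a product of the operators $T_s$ that sum over rewrites of external Dyer-Lashof monomials) with the operadic expression encoded in iterated $\bar{Q}$ compositions. The identity $\nu_k e_k = \nu_k$ from the proof of Lemma~\ref{lem:basis} is the key combinatorial fact that makes these identifications collapse cleanly on the Dyer-Lashof classes. Ensuring the same collapse occurs on the mixed product classes requires an extension of (\ref{eq:Hp_k}) to non-Dyer-Lashof inputs, which amounts to a direct inspection of how the Kuhn-Mitchell-Priddy splitting interacts with external products in $H_* (S^1)^{\wedge 2^{k+1}}_{h\Sigma_2^{\wr k+1}}$.
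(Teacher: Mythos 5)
Your overall plan matches the paper's: reduce the statement to a basis computation, using Theorem~\ref{thm:quad} to analyze $[\td{\psi}_k]_2$ and equations~(\ref{eq:Hi_k})--(\ref{eq:Hp_k}) with $\nu_k e_k = \nu_k$ to analyze $(\alpha_k)_*$. The $(\alpha_k)_*$ computation you describe is essentially the paper's.

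The gap is in the first step. Theorem~\ref{thm:quad} identifies $\Sigma^{-1}\td{\phi}_2$ -- that is, the Goodwillie layer of the adjoint of the \emph{attaching map} $\phi_k$ -- with the operadic action $\mu$. It says nothing directly about $\td{\psi}_k$, whose adjoint lives at the level of the $k$-fold \emph{delooping}. You write that the theorem ``identifies $[\td{\psi}_k]_2$, up to the evident suspension shifts, with the operadic action map $\mu'$,'' but this step hides a real issue: the source of $[\td{\psi}_k]_2$ is $(\Sigma^k \DD_{2^k}(S^1))^{\wedge 2}_{h\Sigma_2}$, which is \emph{not} a suspension of $\DD_{2^k}(S^1)^{\wedge 2}_{h\Sigma_2}$ (the extended power of a suspension is a Thom spectrum, not a bare suspension), so there is no ``evident suspension shift'' relating $[\td{\psi}_k]_2$ to $[\td{\phi}_k]_2$. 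The paper bridges this by fitting $\td{\phi}_k$ and $\td{\psi}_k$ into a commutative square via the evaluation map $E^k: \Sigma^k\Sigma^\infty\Omega^\infty Y \to \Sigma^\infty\Omega^\infty\Sigma^k Y$, passing to $2^{k+1}$st Weiss layers to get a square
$$
\xymatrix{\Sigma^k (\DD_{2^k}(S^1))^{\wedge 2}_{h\Sigma_2} \ar[r]^-{\Sigma^k[\td\phi_k]_2} \ar[d]_{E^k} & \Sigma^{k+1}\DD_{2^{k+1}}(S^1) \ar[d]^= \\ (\Sigma^k\DD_{2^k}(S^1))^{\wedge 2}_{h\Sigma_2} \ar[r]_-{[\td\psi_k]_2} & \Sigma^{k+1}\DD_{2^{k+1}}(S^1) }
$$
and then crucially invoking that $(E^k)_*$ is \emph{surjective} on homology (because $E^k$ commutes with Dyer--Lashof operations). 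Only then can the computation of $([\td{\phi}_k]_2)_*$ via Theorem~\ref{thm:quad} be pushed forward to determine $([\td{\psi}_k]_2)_*$. You need to say this, as the surjectivity of $(E^k)_*$ is the engine that lets the Goodwillie-level statement control the Weiss-level delooped statement; it is not a formal rearrangement.

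One smaller point: you also need the auxiliary functor $P_{2^k,2^{k+1}}$ (the fiber of $P_{2^{k+1}} \to P_{2^k-1}$) to actually be in the hypotheses of Section~\ref{sec:quad}; you gesture at this but the paper explicitly introduces it and notes the attaching map agrees with $\phi_k$ on spheres. Your attention to the mixed product classes is a reasonable instinct, but as phrased the appeal to $\nu_{k+1}e_{k+1} = \nu_{k+1}$ does not by itself settle their images under $\alpha_k$ or $[\td{\psi}_k]_2$; if you pursue that route you would need to verify that the Kuhn idempotent annihilates the relevant product classes and that $\mu'_*$ does likewise.
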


\begin{proof}[Proof of Lemma~\ref{lem:lemma2}]
The map 
$$ (\alpha_k)_*: H_* \left( \Sigma^k \DD_{2^k}(S^1) \right)^{\wedge 2}_{h\Sigma_2} 
\rightarrow H_* \Sigma^{k+1} \DD_{2^{k+1}}(S^1) $$
can be computed using $(\ref{eq:Hi_k})$, $(\ref{eq:Hp_k})$, and the relation $\nu_k T_s = \nu_k$ established in the proof of Lemma~\ref{lem:basis}.  One finds that $(\alpha_k)_*$ 
is given by
$$ (\alpha_k)_* Q^j \sigma^k \bar{Q}^{i_1} \cdots \bar{Q}^{i_k} \iota_1 = \sigma^{k+1} \bar{Q}^{j} \bar{Q}^{i_1} \cdots \bar{Q}^{i_k} \iota_1. $$
We just need to show that the same holds for $[\td{\psi}_k]_2$.  

Since $\phi_k = \Omega^k \psi_k$, the evaluation maps $\Sigma^k \Omega^k \rightarrow \Id$ allow one to fit the adjoints $\td{\phi}_k$, $\td{\psi}_k$ of these natural transformations into the following commutative diagram
$$
\xymatrix{\Sigma^k \Sigma^\infty \Omega^\infty \DD_{2^k}(S^V)  \ar[r]^{\Sigma^k \td{\phi}_k} \ar[d]_{E^k} & \Sigma^{k+1} \DD_{2^{k+1}}(S^V) \ar[d]^= \\ \Sigma^\infty \Omega^\infty \Sigma^k \DD_{2^k}(S^V)  \ar[r]_{\td{\psi}_k} & \Sigma^{k+1} \DD_{2^{k+1}}(S^V) }
$$
On the level of $2^{k+1}$st Weiss layers, evaluated on $V = \RR$, we get a diagram
$$
\xymatrix{\Sigma^k \left( \DD_{2^k}(S^1) \right)^{\wedge 2}_{h\Sigma_2} \ar[r]^{\Sigma^k [\td{\phi}_k]_2} \ar[d]_{E^k} & \Sigma^{k+1} \DD_{2^{k+1}}(S^1) \ar[d]^= \\ \left( \Sigma^k \DD_{2^k}(S^1) \right)^{\wedge 2}_{h\Sigma_2} \ar[r]_{[\td{\psi}]_2} & \Sigma^{k+1} \DD_{2^{k+1}}(S^1) }
$$
The map 
$$ (E^k)_*: H_* \Sigma^k \left( \DD_{2^k}(S^1) \right)^{\wedge 2}_{h\Sigma_2} \rightarrow H_* \left( \Sigma^k \DD_{2^k}(S^1) \right)^{\wedge 2}_{h\Sigma_2} $$
is surjective, since $Q^i$-operations commute with $(E^k)_*$ (see, for example, \cite[Lem.~II.5.6]{Hinfty}).  Therefore, it suffices to compute 
$$ ([\td{\phi}_k]_2)_*: H_* \left( \DD_{2^k}(S^1) \right)^{\wedge 2}_{h\Sigma_2} \rightarrow H_* \Sigma \DD_{2^{k+1}}(S^1). $$
We compute this map using the technology of Section~\ref{sec:quad}.

Let $P_{2^k, 2^{k+1}}(X)$ be the generalized quadratic functor defined by the fiber sequence
$$  P_{2^k, 2^{k+1}}(X) \rightarrow P_{2^{k+1}}(X) \rightarrow P_{2^k-1}(X). $$
Then, as explained in Section~\ref{sec:quad}, there is a fiber sequence
$$ P_{2^k, 2^{k+1}}(X) \rightarrow \Omega^\infty \PP_{2^k, 2^{k+1}-1}(X) \xrightarrow{\phi_k} \Omega^\infty \Sigma \DD_{2^{k+1}}(X). $$
Here we have purposefully abused notation, as this new attaching map $\phi_k$ agrees with the old $\phi_k$ when $X$ is a sphere.  Associated to the adjoint of $\phi_k$ is a transformation
$$ [\td{\phi}_k]_2: \DD_{2^k}(X)^{\wedge 2}_{h\Sigma_2} \rightarrow \Sigma \DD_{2^{k+1}}(X) $$
which reduces to the previously defined $[\td{\phi}_k]_2$ when $X$ is a sphere.
Theorem~\ref{thm:quad} implies that $\Sigma^{-1}[\td{\phi}_k]_2$ is given by the map
$$ \Sigma^{-1} \DD_{2^k}(X)^{\wedge 2}_{h\Sigma_2} \rightarrow \DD_{2^{k+1}}(X) $$
induced by the left action of $\partial_*(\Id)$.  Letting $X = S^1$, and using (\ref{eq:Qbar}), we deduce that
$$ ([\td{\phi}_k]_2)_* Q^j \bar{Q}^{i_1} \cdots \bar{Q}^{i_k}\iota_1 = \sigma \bar{Q}^j \bar{Q}^{i_1} \cdots \bar{Q}^{i_k}. $$
We therefore deduce that
$$ ([\td{\psi}_k]_2)_* Q^j \sigma^k \bar{Q}^{i_1} \cdots \bar{Q}^{i_k} \iota_1 = \sigma^{k+1} \bar{Q}^{j} \bar{Q}^{i_1} \cdots \bar{Q}^{i_k} \iota_1, $$
and the lemma follows.
\end{proof}

Using the above two lemmas, we may now prove Theorem~\ref{thm:psidelta} and deduce Theorem~\ref{thm:main}.

\begin{proof}[Proof of Theorem~\ref{thm:psidelta}]
Endow 
$$ H_* \Omega^\infty \left( \Sigma^k \DD_{2^k}(S^1) \right)^{\wedge 2}_{h\Sigma_2} = \mc{F} H_*\left( \Sigma^k \DD_{2^k}(S^1) \right)^{\wedge 2}_{h\Sigma_2} $$ 
with a weight filtration by defining 
\begin{align*}
w(x) & = 2^{k+1} \quad \text{for $x \in H_*\left( \Sigma^k \DD_{2^k}(S^1) \right)^{\wedge 2}_{h\Sigma_2}$}, \\
w(Q^i x) & = 2 \cdot w(x), \\
w(x \ast y) & = w(x)+w(y).
\end{align*}
Then, by Propositions 2.5 and 2.7 of \cite{Kuhn}, the map
$$ JH_* : H_* \Omega^\infty \Sigma^k \DD_{2^k}(S^1)  \rightarrow H_* \Omega^\infty \left( \Sigma^k \DD_{2^k}(S^1) \right)^{\wedge 2}_{h\Sigma_2} $$ 
preserves the weight filtration.  The maps of the (collapsing) spectral sequences (\ref{eq:SS}) induced by $\alpha_k$ and $[\td{\psi}_k]_2$ imply that the maps
$$
(\Omega^\infty \alpha_k)_*, (\Omega^\infty [\td{\psi}_k]_2)_* :  H_* \Omega^\infty \left( \Sigma^k \DD_{2^k}(S^1) \right)^{\wedge 2}_{h\Sigma_2} \rightarrow H_* \Omega^{\infty}\Sigma^{k+1} \DD_{2^{k+1}}(S^1)
$$
both preserve the weight filtration.  Lemma~\ref{lem:lemma2} implies that on the level of associated graded groups, the maps $E_0 (\Omega^\infty \alpha_k)_*$ and $E_0(\Omega^\infty [\td{\psi}_k]_2)_*$ are equal.  It follows from (\ref{eq:deltafactor}) and Lemma~\ref{lem:lemma2} that 
$$ E_0 (\delta_k)_* = E_0(\psi_k)_*: E_0 H_* B^kD_{2^k}(S^1) \rightarrow E_0 H_* B^{k+1} D_{2^{k+1}} (S^1) $$
as desired.
\end{proof}

\begin{rmk}
The referee points out that the fact that the map $JH_*$ preserves the weight filtration can also be easily deduced from calculus: take the induced map of Goodwillie towers on the natural transformation of functors 
$$ \Sigma^\infty JH: \Sigma^\infty
Q (-) \rightarrow \Sigma^\infty Q (-)^{\wedge 2}_{h\Sigma_2} $$
from spaces to spectra, and apply homology.
\end{rmk}

\begin{proof}[Proof of Theorem~\ref{thm:main}]
Referring to Diagram~(\ref{eq:mydiag}), it is shown in \cite{Kuhn} that
$$ H_* (E_k) = \im (d_k)_* \subseteq H_* B^k D_{2^k}(S^1). $$
The weight filtration on $H_* B^k D_{2^k}(S^1)$ therefore induces a weight filtration on $H_* E_k$.  It follows from Theorem~\ref{thm:psidelta} that 
$$ E_0 (h_k')_* = E_0 (h_k)_*: E_0 H_*E_k \rightarrow E_0 H_* B^{k+1} D_{2^{k+1}}(S^1). $$
Kuhn proved Theorem~\ref{thm:Kuhn} by showing that 
$$ E_0(\td{d}_k)_* \circ E_0(h_k) = \Id: E_0 H_*E_k \rightarrow E_0 H_* E_k. $$
We deduce that 
$$ E_0(\td{d}_k)_* \circ E_0(h'_k) = \Id: E_0 H_*E_k \rightarrow E_0 H_* E_k $$
and thus $\td{d}_k \circ h'_k$ is a self-equivalence of $E_k$.
Consider the induced splittings
$$ B^k D_{2^k}(S^1) \simeq E_{k-1} \times E_{k}. $$
With respect to these splittings, $d_k$ takes ``matrix form''
$$ d_k = 
\begin{bmatrix}
0 & 0 \\
1 & 0
\end{bmatrix}
$$
and there exist self-equivalences $f_k: E_k \rightarrow E_k$ so that
$$
\psi_k =
\begin{bmatrix}
\ast & f_k \\
\ast & 0
\end{bmatrix}.
$$
We deduce that 
$$
d_k \psi_k + \psi_{k-1} d_{k-1} = 
\begin{bmatrix}
f_{k-1} & 0
\\ 
\ast & f_k
\end{bmatrix}
$$
and in particular, $d_k \psi_k + \psi_{k-1} d_{k-1}$ is an equivalence.
\end{proof}

\bibliographystyle{amsalpha}
\nocite{*}
\bibliography{GoodwhiteAGT}

\end{document}